\newtheorem{proposition}{Proposition}[section]
\newtheorem{lemma}[proposition]{Lemma}
\newtheorem{theorem}[proposition]{Theorem}
\newtheorem{corollary}[proposition]{Corollary}
\newtheorem{remark}[proposition]{Remark}
\renewcommand{\theequation}{\thesection.\arabic{equation}}
\renewcommand{\oddsidemargin}{5mm}
\begin{document}
	\parindent 15pt
	\renewcommand{\theequation}{\thesection.\arabic{equation}}
	\renewcommand{\baselinestretch}{1.15}
	\renewcommand{\arraystretch}{1.1}
	\renewcommand{\vec}[1]{\bm{#1}}
	\title{Multiple normalized solutions for a class of dipolar Gross-Pitaveskii equation with a mass subcritical perturbation}
	\author{Yalin Shen$^{a,}$\thanks{Email: 2112202001@stmail.ujs.edu.cn}~~~Yichen Zhang$^{b,}$\thanks{Email: zyc24095@buaa.edu.cn}~~~Thin Van Nguyen$^{c,}$\thanks{Corresponding author Email: Thinmath@gmail.com}\\
		{\small $^a$ School of Mathematical Sciences, Jiangsu University, Zhenjiang 212013, P. R. China}\\
		{\small $^b$ School of Mathematical Sciences, Beihang University, Beijing 100191, P. R. China}\\
		{\small $^c$ Thai Nguyen University of Education, Department of Mathematics, Luong Ngoc Quyen street,}\\
			{\small Thai Nguyen city, Thai Nguyen, VietNam}
	}
	\date{}
	\maketitle
	\begin{abstract}
		In this paper, we study the existence of multiple normalized solutions to the following dipolar Gross-Pitaveskii equation with a mass subcritical perturbation
			\begin{align*}
			\left\{
			\begin{array}{lll}
				-\frac{1}{2}\Delta u+\mu u+V(\varepsilon x)u+\lambda_1|u|^{2}u+\lambda_2(K\ast|u|^{2})u+\lambda_3|u|^{p-2}u=0, \quad&\text{in}\;  \mathbb{R}^{3},\\
				\int_{{\mathbb{R}}^3}|u|^{2}dx=a^{2},
			\end{array}\right.
		\end{align*}
		where $a,\varepsilon>0$, $2<p<\frac{10}{3}$, $\mu \in \mathbb{R}$ denotes the Lagrange multiplier, $\lambda_3<0$, $(\lambda_1,\lambda_2) \in \left\lbrace (\lambda_1,\lambda_2) \in \mathbb{R}^{2}:\lambda_1<\frac{4\pi}{3}\lambda_2\le 0\; \text{or}\; \lambda_1<-\frac{8\pi}{3}\lambda_2\le 0 \right\rbrace$, $V(x)$ is an external potential, $\ast$ stands for the convolution, $K(x)=\frac{1-3cos^{2}\theta (x)}{|x|^{3}}$ and $\theta (x)$ is the angle between the dipole axis determined by $(0,0,1)$ and the vector $x$. Under some assumptions of $V$, we use variational methods to prove that the number of normalized solutions is not less than the number of global minimum points of $V$ if $\varepsilon> 0$ is sufficiently small.  \\
		\textbf{Keywords:} Dipolar Gross-Pitaveskii equation; multiple normalized solutions; variational methods. \\
		\textbf{MSC Subject Classification (2020):} 35A15, 35J20, 35J60.
	\end{abstract}
	
	\section{Introduction}
	\setcounter{section}{1}
	\setcounter{equation}{0}
	In 1995, Cornell et al. achieved the first Bose Einstein condensate in their experiments on dilute alkali gases. Other condensates of different atoms have been produced since this groundbreaking experiment. In recent years, dipole Bose-Einstein condensate, i.e. condensate composed of particles with permanent electric or magnetic dipole moment, has attracted much attention (see e.g.	\cite{Bao-Cai2013,Bao-Cai-Wang 2010}). By using Gross Pitaevskii approximation to describe dipole Bose-Einstein condensate, the following three-dimensional dimensionless dipolar Gross-Pitaveskii equation can be obtained
	\begin{align}\label{eq:main-01 time}
		\begin{array}{lll}
			i\partial_{t}\psi=-\frac{1}{2}\Delta \psi+V(x)\psi+\lambda_1|\psi|^{2}\psi+\lambda_2(K\ast|\psi|^{2})\psi+\lambda_3|\psi|^{p-2}\psi=0, \;\;(t,x) \in \mathbb{R}^{+}\times \mathbb{R}^{3},
		\end{array}
	\end{align}
	where $\psi:[0,+\infty)\times \mathbb{R}^{3} \rightarrow \mathbb{C}$ is the wave function, $i$ denotes the imaginary unit, $\ast$ stands for the convolution, $2<p<6$,  $V(x)$ is an external potential, $\lambda_{i}(i=1,2,3)$ describe the strength of the three nonlinearities, $K(x)=\frac{1-3cos^{2}\theta (x)}{|x|^{3}}$ and $\theta (x)$ is the angle between the dipole axis determined by $(0,0,1)$ and the vector $x$.
 Moreover, \eqref{eq:main-01 time} with $\lambda_3\neq0$ models quantum fluctuations in the condensate (the so-called Lee-Huang-Yang correction), see e.g. \cite{Feng-Cao-Liu2021}. For more physical background, please refer to \cite{Bellazzini-Forcella2019,Bellazzini-Forcella2021,He-Luo2019} and the references therein. 
	
	To our knowledge, rigorous mathematical research on 
	problem \eqref{eq:main-01 time} with initial condition $\psi_{0}(x)=\psi(0,x) \in H^{1}(\mathbb{R}^{3})$ was first conducted by Carles et al. \cite{Carles- Markowich 2008}, where $V(x)=\frac{|x|^{2}}{2}$ and $\lambda_3=0$. They obtained a unique global solution in $\lambda_1 \ge \frac{4 \pi}{3}\lambda_2 \ge 0$ and finite time blow-up may occur in $\lambda_1 <\frac{4 \pi}{3}\lambda_2$. Later, Antonelli and Sparber \cite{Antonelli-Sparber 2011} studied the stationary equation of problem \eqref{eq:main-01 time}, that is,
	\begin{align}\label{eq 1}
		\begin{array}{lll}
			-\frac{1}{2}\Delta u+\mu u+V(x)u+\lambda_1|u|^{2}u+\lambda_2(K\ast|u|^{2})u+\lambda_3|u|^{p-2}u=0,  \quad&\text{in}\;  \mathbb{R}^{3},
		\end{array}
	\end{align}
	where $\mu \in \mathbb{R}$ and $u(x)$ is a time-independent function satisfying $\psi(x,t)=e^{i\mu t}u(x)$.
	They proved the existence of the solutions of \eqref{eq 1} with $\mu >0$, $\lambda_{3}=0$ and $V\equiv 0$ in $\mathbb{R}^{3}$ when $\lambda_1<\frac{4 \pi}{3}\lambda_2$ and $\lambda_2>0$ or $\lambda_1<-\frac{8 \pi}{3}\lambda_2$ and $\lambda_2<0$.
	Based on \cite{Antonelli-Sparber 2011,Carles- Markowich 2008}, the coordinate plane $(\lambda_1,\lambda_2)$ is divided into the stable regime
	\begin{align}\label{wen ding}
		\left\{
		\begin{array}{lll}
			\lambda_1-\frac{4 \pi}{3}\lambda_2 \ge 0,\;\;\; \lambda_2\ge 0,\;\; \text{or}\\
			\lambda_1+\frac{8 \pi}{3}\lambda_2 \ge 0,\;\;\; \lambda_2\le 0
		\end{array}\right.
	\end{align}
	and the unstable regime     
	\begin{align}\label{bu weding}
		\left\{
		\begin{array}{lll}
			\lambda_1-\frac{4 \pi}{3}\lambda_2 < 0,\;\;\; \lambda_2> 0,\;\; \text{or}\\
			\lambda_1+\frac{8 \pi}{3}\lambda_2 < 0,\;\;\; \lambda_2< 0,
		\end{array}\right.
	\end{align} 
which affect the symbol (see \cite{Dinh2021}) of the nonlinear potential energy 
 \begin{align}\label{N}
	\begin{split}
		N(u):=\int_{{\mathbb{R}}^3}\lambda_1 |u|^{4}+\lambda_2 (K\ast |u|^{2})|u|^{2}dx. 
	\end{split}
\end{align}

In recent years, problem \eqref{eq 1} with perscribed mass $\int_{{\mathbb{R}}^3}|u|^{2}dx=a^{2}$ and $a>0$ has been considered in the stable or unstable regimes, see \cite{Carles-Hajaiej2015,Bellazzini-Jeanjean 2017,Luo-Stylianou 2021,Luo-Stylianou 2021-2,Luo-Yang 2023,Wu-Tang 2024}.               
 For $V(x)=\frac{|x|^{2}}{2}$, $\lambda_3=0$ and $(\lambda_1,\lambda_2) \neq 0$ satisfying \eqref{wen ding},  Carles and Hajaiej \cite{Carles-Hajaiej2015} obtained a non-negative minimizer, which is unique and Steiner symmetric. For $V(x)=\frac{\gamma|x|^{2}}{2}$, $(\lambda_1,\lambda_2)$ satisfying \eqref{bu weding}, $\lambda_{3}=0$ and $\gamma \ge 0$, Bellazzini and Jeanjean \cite{Bellazzini-Jeanjean 2017} proved the existence of the ground state.
  For $V \equiv 0$ in $\mathbb{R}^{3}$ and $(\lambda_1,\lambda_2)$ satisfying \eqref{bu weding}, Luo and Stylianou studied the existence of the ground state when $\lambda_3<0$ and $p=5$, see \cite{Luo-Stylianou 2021} or when $\lambda_3>0$ and $p \in (4,6]$, see \cite{Luo-Stylianou 2021-2}.
  For $V \equiv 0$ in $\mathbb{R}^{3}$, $(\lambda_1,\lambda_2)$ satisfying \eqref{bu weding}, $\lambda_3<0$ and $2<p<\frac{10}{3}$, Luo and Yang \cite{Luo-Yang 2023} studied the existence of the ground state. Moreover, they proved that any ground state is a local minimizer and it is stable under the Cauchy flow. Very recently, for $V \equiv 0$ in $\mathbb{R}^{3}$, $2<p<\frac{10}{3}$ and $(\lambda_1,\lambda_2)$ satisfying \eqref{bu weding}, Wu and Tang \cite{Wu-Tang 2024} considered the existence of multiple normalized solutions by replacing $\lambda_{3}$ with $-h(\epsilon x)<0$, where $\epsilon>0$ and $h$ satisfies some technical assumptions.

  In light of the above discussion, we focus on the existence of multiple normalized solutions of problem \eqref{eq 1} with prescribed
  mass when $V\not \equiv0$ in $\mathbb{R}^{3}$, which has not been studied in the existing literature. For this purpose, in present paper, we consider the following dipolar Gross-Pitaveskii equation
\begin{align}\label{eq:main-01}
	\left\{
	\begin{array}{lll}
		-\frac{1}{2}\Delta u+\mu u+V(\varepsilon x)u+\lambda_1|u|^{2}u+\lambda_2(K\ast|u|^{2})u+\lambda_3|u|^{p-2}u=0, \quad&\text{in}\;  \mathbb{R}^{3},\\
		\int_{{\mathbb{R}}^3}|u|^{2}dx=a^{2},
	\end{array}\right.
\end{align}
where $a,\varepsilon>0$, $2<p<\frac{10}{3}$, $(\lambda_1,\lambda_2,\lambda_3)\in \mathbb{R}^{2}\times \mathbb{R}^{-}$ and $\mu \in \mathbb{R}$ denotes the Lagrange multiplier. 
Moreover, the potential $V$ satisfies the following assumptions:\\
$(V_1)$ $V\in L^{\infty}(\mathbb{R}^{3})$, $V(x)\ge 0$ for all $x\in \mathbb{R}^{3}$.\\
$(V_2)$ $V_{\infty}=\lim\limits_{|x|\rightarrow +\infty}V(x)>V_0:=\min\limits_{x \in \mathbb{R}^{3}}V(x)=0$.\\
$(V_3)$ $V^{-1}(\{0\})=\{c_1,c_2,\dots, c_l\}$ with $c_1=0$ and $c_j \neq c_s$ if $j \neq s$.

Solutions of $\eqref{eq:main-01}$ are the critial points of the energy function $J_{\varepsilon}:H^{1}({\mathbb{R}}^3) \rightarrow \mathbb{R}$ constrained to 
\begin{align}\label{Sa}
	S(a):=\{u \in H^{1}(\mathbb{R}^{3}):\int_{{\mathbb{R}}^3}|u|^{2}=a^{2}\},
\end{align}
where
\begin{align*}
	\begin{split}			J_{\varepsilon}(u):=\frac{1}{2}\int_{{\mathbb{R}}^3}|\nabla u|^{2}dx+\int_{{\mathbb{R}}^3}V(\varepsilon x)|u|^{2}dx+\frac{1}{2}N(u)+\frac{2\lambda_3}{p}\int_{{\mathbb{R}}^3}|u|^{p}dx.
	\end{split}
\end{align*}
Here, $N(u)$ is defined in \eqref{N} and $H^{1}(\mathbb{R}^3)$ is the usual Sobolev space endowed with norm
\begin{align*}
	\|u\|_{H^{1}(\mathbb{R}^3)}:=\left( \int_{{\mathbb{R}}^3}(|\nabla u|^{2}+|u|^{2})dx\right)^{\frac{1}{2}},\qquad \forall u \in H^{1}(\mathbb{R}^3).
\end{align*}
Clearly, $J_{\varepsilon} \in C^{1}(H^{1}(\mathbb{R}^{3}),\mathbb{R})$. 
 If $u$ is a critical point of $J_{\varepsilon}|_{S(a)}$, then it follows from the Lagrange multiplier rule that $(u,\mu)$ is a solution of problem $\eqref{eq:main-01}$ for some $\mu \in \mathbb{R}$.

We point out that conditions $(V_1)$-$(V_3)$ were introduced by \cite{Alves2022,Xu-Song 2024} to study the following Schr\"{o}dinger equation
\begin{align}\label{eq:22}
	\left\{
	\begin{array}{lll}
		-\Delta u+V(\varepsilon x)u=\lambda u+g(u), \quad&\text{in}\;  \mathbb{R}^{N},\\
		\int_{{\mathbb{R}}^N}|u|^{2}dx=a^{2},
	\end{array}\right.
\end{align}
where $a,\varepsilon>0$, $V$ denotes the potential, $\lambda \in \mathbb{R}$ is the Lagrange multiplier and $g$ stands for a function. 
 Alves \cite{Alves2022} proved that the numbers of normalized solutions are at least the numbers of global minimum points of $V$ when $\varepsilon$ is small enough and $g$ is a continuous function with $L^2$-subcritical growth. Later, Xu et al. \cite{Xu-Song 2024} extended the result of \cite{Alves2022} to problem \eqref{eq:22} with mixed nonlinearities, where $g(u)=\beta |u|^{q-2}u+|u|^{p-2}u$ and $2<q<2+\frac{4}{N}<p<\frac{2N}{N-2}$. 
 Inspired by \cite{Xu-Song 2024,Wu-Tang 2024}, our main result is as follows.

 \begin{theorem}\label{th1.1}
 	Let $(\lambda_1,\lambda_2) \in \mathcal{B}:=\left\lbrace (\lambda_1,\lambda_2) \in \mathbb{R}^{2}:\lambda_1<\frac{4\pi}{3}\lambda_2\le 0\; \text{or}\; \lambda_1<-\frac{8\pi}{3}\lambda_2\le 0 \right\rbrace$, $\lambda_3<0$ and $2<p<\frac{10}{3}$. Suppose that $(V_1)$-$(V_3)$ hold. Then there exist $\bar{\varepsilon},V_{\ast}>0$ and $\bar{a}>0$, such that problem \eqref{eq:main-01} has at least $l$ couples $(u_{j},\mu_{j})\in H^{1}(\mathbb{R}^{3}) \times \mathbb{R}^{+}$ of weak solutions for $\|V\|_{\infty}<V_{\ast}$, $\varepsilon \in (0,\bar{\varepsilon})$ and $a\in (0,\bar{a}]$ with $\int_{{\mathbb{R}}^3} |u_j|^{2}dx=a^{2}$ and negative energies for $j=1,2,\ldots,l$.
 \end{theorem}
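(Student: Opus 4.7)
The plan is to combine the local-minimization framework developed for the autonomous dipolar equation in Luo-Yang \cite{Luo-Yang 2023} with the barycenter/category multiplicity argument of Alves \cite{Alves2022} and Xu-Song \cite{Xu-Song 2024}. I first study the autonomous limit functional $J_0$ obtained by setting $V\equiv 0$. The Fourier representation $\widehat K(\xi)=\tfrac{4\pi}{3}(3\cos^2\theta_\xi-1)$ together with $(\lambda_1,\lambda_2)\in\mathcal B$ yields $N(u)\le 0$ for all $u\in H^1(\mathbb R^3)$, while the mass-preserving dilation $u_\tau(x)=\tau^{3/2}u(\tau x)$ gives
\[
J_0(u_\tau)=\tfrac{\tau^2}{2}\|\nabla u\|_2^2+\tfrac{\tau^3}{2}N(u)-\tfrac{2|\lambda_3|}{p}\tau^{3(p-2)/2}\|u\|_p^p.
\]
Since $3(p-2)/2<2<3$ when $2<p<10/3$, this exhibits a local-well structure: for all $a\in(0,\bar a]$ with $\bar a$ small enough there is a radius $R_a>0$ such that
\[
m(a):=\inf\bigl\{J_0(u):u\in S(a),\,\|\nabla u\|_2\le R_a\bigr\}<0<\inf\bigl\{J_0(u):u\in S(a),\,\|\nabla u\|_2=R_a\bigr\}.
\]
Schwarz rearrangement (permissible under $\mathcal B$, cf. \cite{Carles-Hajaiej2015}) and a Brezis-Lieb decomposition adapted to the dipolar kernel produce a positive, radially symmetric ground state $w_a$ of $m(a)$ with exponential decay, and give relative compactness of minimizing sequences up to translations.

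Next I fix $\delta>0$ small enough that $\overline{B_{2\delta}(c_j)}$, $j=1,\dots,l$, are pairwise disjoint, and build the translated, cut-off, renormalized test functions
\[
w_j^\varepsilon(x):=a\,\frac{\eta(\varepsilon x-c_j)\,w_a(x-c_j/\varepsilon)}{\bigl\|\eta(\varepsilon\,\cdot\,-c_j)\,w_a(\,\cdot\,-c_j/\varepsilon)\bigr\|_2},\qquad \eta\in C_c^\infty,\ \eta\equiv 1 \text{ on } B_\delta,
\]
which belong to $S(a)$. By exponential decay of $w_a$ and continuity of $V$ at $c_j$ with $V(c_j)=0$, a direct estimate (with a translation estimate on the dipolar term) yields $J_\varepsilon(w_j^\varepsilon)\to m(a)$ as $\varepsilon\to 0$. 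I then introduce a barycenter $\beta_\varepsilon(u):=a^{-2}\int_{\mathbb R^3}\Upsilon(\varepsilon x)|u|^2\,dx$, where $\Upsilon$ is the identity truncated outside a large ball containing all $c_j$, together with the pairwise disjoint sets
\[
\mathcal M_\varepsilon^j:=\bigl\{u\in S(a):\|\nabla u\|_2\le R_a,\,|\beta_\varepsilon(u)-c_j|\le\delta\bigr\},\qquad m_\varepsilon^j(a):=\inf_{\mathcal M_\varepsilon^j}J_\varepsilon,
\]
so that the test functions $w_j^\varepsilon$ furnish $\limsup_{\varepsilon\to 0} m_\varepsilon^j(a)\le m(a)$.

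The technical heart of the proof is a concentration lemma: if $\varepsilon_n\to 0$ and $(u_n)\subset S(a)$ satisfies $\|\nabla u_n\|_2\le R_a$ with $J_{\varepsilon_n}(u_n)\to m(a)$, then along a subsequence some translates $u_n(\cdot-y_n)$ converge strongly in $H^1(\mathbb R^3)$ to a minimizer of $m(a)$ and $\varepsilon_n y_n\to c_{j_0}$ for some $j_0$. Vanishing is excluded because $m(a)<0$ would force $\|u_n\|_p\to 0$; dichotomy is ruled out by strict subadditivity of $a\mapsto m(a)$ (following from its scaling); and convergence of $\varepsilon_n y_n$ outside $V^{-1}(\{0\})$ is forbidden because then $\liminf J_{\varepsilon_n}(u_n)\ge m(a)+\alpha a^2$ with $\alpha:=\inf_{d(x,V^{-1}(\{0\}))\ge\delta}V(x)>0$. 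Consequently $m_\varepsilon^j(a)\to m(a)$, and for $\varepsilon$ small every near-minimizer of $m_\varepsilon^j(a)$ lies in the interior of $\mathcal M_\varepsilon^j$. An Ekeland/deformation argument on $\mathcal M_\varepsilon^j$ yields a bounded $(PS)$ sequence whose limit (via the same lemma) is an interior constrained critical point $u_j\in \mathcal M_\varepsilon^j$ with $J_\varepsilon(u_j)<0$; disjointness of the $\mathcal M_\varepsilon^j$ produces $l$ distinct solutions. Positivity of $\mu_j$ then follows by combining the identity $\langle J_\varepsilon'(u_j),u_j\rangle=-\mu_j a^2$ with the Pohozaev identity and using $J_\varepsilon(u_j)<0$.

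The main obstacles are two intertwined issues. First, unlike a Riesz-type kernel, $K$ is a nonpositive Calderón-Zygmund operator, so cut-off localization estimates of the form $|N(\eta u)-N(u)|=o(1)$ need to be performed on the Fourier side via the multiplier $\lambda_1+\lambda_2\widehat K$, and this analysis permeates the concentration lemma and the Brezis-Lieb splitting. Second, the smallness $\|V\|_\infty<V_\ast$ is used precisely to preserve the local-well structure $m(a)<\inf_{\partial B_{R_a}\cap S(a)}J_\varepsilon$ \emph{uniformly} in $\varepsilon$ and in the position of the concentration point, while $(V_2)$ supplies the energy penalty that excludes drift of concentration points to infinity; balancing these smallness conditions against the need for $\mu_j>0$ and against the dipolar nonlocality is the delicate part of the argument.
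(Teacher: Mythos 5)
Your proposal follows essentially the same strategy as the paper: a local minimization on the set $\{u\in S(a):\|\nabla u\|_2<\bar r\}$ created by the sign condition $N\le 0$ on $\mathcal B$ together with the Gagliardo--Nirenberg bounds, translated test functions concentrating at $c_j/\varepsilon$, a barycenter map separating $l$ disjoint neighborhoods of the wells, a concentration/compactness lemma, Ekeland's variational principle in each neighborhood, and positivity of the multiplier from the negativity of the energy. The skeleton is sound and would work.

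Two of your choices differ from the paper and are worth noting because they add avoidable work. First, you build the test functions $w_j^\varepsilon$ with a cut-off $\eta$ and renormalization, which forces you to prove exponential decay of the autonomous minimizer and, as you yourself flag, to control $|N(\eta u)-N(u)|$ for the Calder\'on--Zygmund kernel $K$ --- a genuinely delicate point that your proposal leaves unresolved. The paper deliberately avoids this (see its Remark 1.2): it uses the plain translates $\hat u^i_\varepsilon(x)=u(x-c_i/\varepsilon)$, for which $N$ is translation invariant and $\int V(\varepsilon x)|\hat u^i_\varepsilon|^2\,dx=\int V(\varepsilon x+c_i)|u|^2\,dx\to 0$ by dominated convergence and $(V_3)$ alone, with no decay estimate needed; the only nonlocal splitting required is the Brezis--Lieb-type decomposition of $N$ along weakly convergent sequences, which is quoted from Wu--Tang. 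You could adopt this simplification and delete the entire localization analysis. Second, for $\mu_j>0$ you invoke the Pohozaev identity, which is awkward in the presence of the non-autonomous potential $V(\varepsilon x)$ (it produces an $x\cdot\nabla V$ term you have not controlled); the paper instead only tests the equation against $u_j$ and rewrites $-\mu_j a^2=J_\varepsilon(u_j)+\tfrac12 N(u_j)-\tfrac{(p-2)|\lambda_3|}{p}\|u_j\|_p^p$, each summand being negative. Finally, your compactness step is phrased as a concentration lemma for minimizing sequences with $\varepsilon_n\to 0$, whereas the Ekeland argument actually requires a $(PS)_c$ condition at fixed small $\varepsilon$ for levels $c<\Upsilon_0(a)+\xi$; the paper proves the latter via a quantitative lower bound $\liminf_n\|u_n-u_\varepsilon\|_2^2\ge\tau$ on the lost mass and the strict subadditivity of $a\mapsto\Upsilon_\omega(a)$. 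These are variants of the same idea, but you should make sure the compactness statement you prove is the one the deformation/Ekeland step consumes.
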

\begin{remark}
	We must assume that $(\lambda_1,\lambda_2) \in \mathcal{B}$ to ensure that $N<0$ (see Lemma \ref{N<}), which is crucial to our problem. In addition, we emphasize that unlike the approach in \cite{Xu-Song 2024} which requires constructing truncation functions, our method is simpler and only relies on a set, see \eqref{set}.
\end{remark}
\begin{remark}
	An example satisfying conditions $(V_1)$-$(V_3)$ is the following function $V: \mathbb{R}^{3} \rightarrow \mathbb{R}$,
	\begin{align*}
		V(x)=1-\frac{1}{1+|x-k_1||x-k_2|\cdots |x-k_l|},
	\end{align*}	
	where $0=|k_1|<|k_2|<\cdots<|k_l|$.
\end{remark}
The paper is organized as follows. In Section \ref{sec:pre}, we give some preliminary results. Section \ref{Existence result} is devoted to proving the existence of normalized solutions of the autonomous case of problem \eqref{eq:main-01}. Finally, we prove Theorem \ref{th1.1} in Section \ref{sec:Multiplicity result}.

\section{Preliminaries}\label{sec:pre}
\setcounter{section}{2}
\setcounter{equation}{0}
In this section, we give some preliminary results. For convenience, one let $\|\cdot\|_{q}:=\|\cdot\|_{L^{q}(\mathbb{R}^{3})}$ for $q\in [1,+\infty]$. We start with the following Gagliardo-Nirenberg inequality, see \cite{Weinstein 1983}.
\begin{lemma}\label{G-N}
	For any $u \in H^{1}(\mathbb{R}^{3})$ and $q \in(2,6)$, there exists a constant $C_q>0$ such that 
	\begin{align*}
		\|u\|_{q} \le C_{q}\|\nabla u\|_{2}^{\gamma_q}\| u\|_{2}^{1-\gamma_q},
	\end{align*}
	where $\gamma_q:=\frac{3(q-2)}{2q}$.
\end{lemma}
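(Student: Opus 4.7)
The strategy is to exhibit $l$ distinct critical points of $J_\varepsilon|_{S(a)}$, each concentrated near a different global minimum point $c_j$ of $V$, through a topological localization argument in the spirit of Alves \cite{Alves2022} and Xu--Song \cite{Xu-Song 2024}, adapted to the dipolar setting.

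The first ingredient is the autonomous limit. Dropping the potential term, I would show that the constrained infimum $m(a) := \inf_{S(a)} J_0$ (where $J_0$ denotes $J_\varepsilon$ with $V \equiv 0$) is strictly negative and attained by a ground state $w_a$ for $a \in (0,\bar a]$. The negativity uses $N(u) \leq 0$ on $\mathcal{B}$ (Lemma \ref{N<}), $\lambda_3 < 0$, and the mass-subcritical growth $p < 10/3$ combined with the Gagliardo--Nirenberg inequality of Lemma \ref{G-N}; this is the content of Section \ref{Existence result}.

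Next, for each $j = 1, \ldots, l$ I would introduce a localization set of the form
$$\Theta_\varepsilon^j := \Bigl\{ u \in S(a) : \int_{B_{R/\varepsilon}(c_j/\varepsilon)} |u|^2\, dx \geq \tfrac{3}{4} a^2 \Bigr\},$$
with $R > 0$ fixed so that the balls $B_{2R}(c_j)$ are pairwise disjoint in $\mathbb{R}^3$. This is the single set to which the authors' remark refers. Using suitably translated and cut-off copies of $w_a$ centered at $c_j/\varepsilon$ as test functions, I would show $m_\varepsilon^j := \inf_{\Theta_\varepsilon^j} J_\varepsilon \leq m(a) + o(1)$ as $\varepsilon \to 0^+$ and $\|V\|_\infty \to 0$, so in particular $m_\varepsilon^j < 0$ once $\varepsilon < \bar\varepsilon$ and $\|V\|_\infty < V_\ast$. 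Combining Ekeland's variational principle with a strict inequality between $m_\varepsilon^j$ and $\inf_{\partial \Theta_\varepsilon^j} J_\varepsilon$ (which forces minimizing sequences to remain in the interior), together with a Brezis--Lieb type decomposition adapted to the dipolar term, I would show that $m_\varepsilon^j$ is attained by some $u_\varepsilon^j$ lying in the interior of $\Theta_\varepsilon^j$ and solving the Euler--Lagrange equation with multiplier $\mu_\varepsilon^j$. Distinctness of the $l$ solutions follows because the sets $\Theta_\varepsilon^j$ are pairwise disjoint for $\varepsilon$ small, and the energies are negative by construction. Positivity of $\mu_\varepsilon^j$ would be obtained by pairing the Euler--Lagrange equation with $u_\varepsilon^j$ and combining with $J_\varepsilon(u_\varepsilon^j) < 0$, $N(u_\varepsilon^j) \leq 0$, and $\lambda_3 < 0$ to isolate $\mu_\varepsilon^j$ with a strictly positive lower bound.

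The main obstacle will be proving compactness of the constrained Palais--Smale sequence at level $m_\varepsilon^j$ in the presence of the nonlocal, sign-indefinite dipolar term $\int (K \ast |u|^2)|u|^2\,dx$. Since $K$ is a Calder\'on--Zygmund kernel on $\mathbb{R}^3$, dichotomy of mass cannot be excluded merely by a strict sign on $N$. The localization built into $\Theta_\varepsilon^j$ together with strict subadditivity of $a \mapsto m(a)$ is precisely what forces the concentration-compactness alternative into the ``tight'' case: any loss of mass in a minimizing sequence would split the limit energy into pieces whose sum strictly exceeds $m_\varepsilon^j$, contradicting minimality.
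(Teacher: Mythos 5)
Your proposal does not address the statement at hand. The statement is Lemma \ref{G-N}, the Gagliardo--Nirenberg interpolation inequality
\begin{align*}
\|u\|_{q} \le C_{q}\|\nabla u\|_{2}^{\gamma_q}\| u\|_{2}^{1-\gamma_q},
\qquad \gamma_q=\frac{3(q-2)}{2q},\ q\in(2,6),
\end{align*}
which is a classical analytic inequality with no reference to the functional $J_\varepsilon$, the potential $V$, or the dipolar kernel $K$. What you have written is instead a proof plan for Theorem \ref{th1.1} (the multiplicity result): you describe localization sets near the minima $c_j$, Ekeland's principle, concentration--compactness, and Lagrange multipliers. None of that bears on the inequality to be proved; your text even invokes Lemma \ref{G-N} as a known ingredient, which makes the argument circular as a purported proof of that lemma.

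For the record, the paper offers no proof of Lemma \ref{G-N} at all --- it simply cites Weinstein's 1983 paper, as this is a standard result. If you wanted to prove it yourself, the usual routes are: (i) interpolation of $\|u\|_q$ between $\|u\|_2$ and the critical Sobolev norm $\|u\|_6 \le C\|\nabla u\|_2$ via H\"older's inequality, writing $\frac{1}{q}=\frac{1-\theta}{2}+\frac{\theta}{6}$ and checking that $\theta=\gamma_q$; or (ii) a scaling argument showing that the exponent $\gamma_q$ is forced by dilation invariance, combined with a compactness or rearrangement argument for the existence of the optimal constant. Either of these is a two-line to one-page argument entirely disjoint from the variational machinery you described.
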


\begin{lemma}\cite[Lemma 2.1]{Wu-Tang 2024}\label{N<}
	Let $u \in H^{1}(\mathbb{R}^{3})$, then $|N(u)|\le \Lambda \|u\|^{4}_{4}$, where $\Lambda:=\text{max}\{|\lambda_1-\frac{4\pi}{3}\lambda_2|,|\lambda_1+\frac{8\pi}{3}\lambda_2|\}$. Moreover, if $(\lambda_1,\lambda_2)\in\mathcal{B}$, then $N(u)<0$.
\end{lemma}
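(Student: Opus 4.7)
The plan is to pass to Fourier variables, where the dipole--dipole interaction diagonalises, and then exploit the known range of $\widehat{K}$. Recall (see e.g.\ \cite{Carles-Hajaiej2015,Bellazzini-Jeanjean 2017}) that, for the kernel $K(x)=(1-3\cos^{2}\theta(x))/|x|^{3}$ in $\mathbb{R}^{3}$, the distributional Fourier transform is
\begin{equation*}
\widehat{K}(\xi)=\tfrac{4\pi}{3}\bigl(3\cos^{2}\theta(\xi)-1\bigr),\qquad \xi\in\mathbb{R}^{3}\setminus\{0\},
\end{equation*}
where $\theta(\xi)$ is the angle between $\xi$ and the dipole axis $(0,0,1)$. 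In particular $\widehat{K}(\xi)\in\bigl[-\tfrac{4\pi}{3},\tfrac{8\pi}{3}\bigr]$, with the extreme values attained at $\theta(\xi)=\pi/2$ and $\theta(\xi)=0$ respectively. I would state this as a fact and cite it rather than reprove it.

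Next, set $f:=|u|^{2}$, which belongs to $L^{2}(\mathbb{R}^{3})$ since $u\in H^{1}(\mathbb{R}^{3})\hookrightarrow L^{4}(\mathbb{R}^{3})$. Plancherel gives $\|f\|_{2}^{2}=\|\widehat{f}\|_{2}^{2}$, and the convolution theorem yields
\begin{equation*}
\int_{\mathbb{R}^{3}}(K\ast|u|^{2})|u|^{2}\,dx=\int_{\mathbb{R}^{3}}\widehat{K}(\xi)\,|\widehat{f}(\xi)|^{2}\,d\xi.
\end{equation*}
Combining with $\int|u|^{4}\,dx=\int|\widehat{f}(\xi)|^{2}\,d\xi$, one obtains the key identity
\begin{equation*}
N(u)=\int_{\mathbb{R}^{3}}\bigl(\lambda_{1}+\lambda_{2}\widehat{K}(\xi)\bigr)\,|\widehat{f}(\xi)|^{2}\,d\xi.
\end{equation*}

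From here both claims are immediate. For the bound, since $\widehat{K}(\xi)\in[-\tfrac{4\pi}{3},\tfrac{8\pi}{3}]$, the affine function $t\mapsto\lambda_{1}+\lambda_{2}t$ attains its extreme values on this interval at the endpoints, so
\begin{equation*}
\bigl|\lambda_{1}+\lambda_{2}\widehat{K}(\xi)\bigr|\le \max\bigl\{|\lambda_{1}-\tfrac{4\pi}{3}\lambda_{2}|,\,|\lambda_{1}+\tfrac{8\pi}{3}\lambda_{2}|\bigr\}=\Lambda,
\end{equation*}
and therefore $|N(u)|\le \Lambda\int|\widehat{f}|^{2}\,d\xi=\Lambda\|u\|_{4}^{4}$.

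For the strict negativity when $(\lambda_{1},\lambda_{2})\in\mathcal{B}$, I would split into the two sub-regimes of $\mathcal{B}$ and show that the supremum of $\lambda_{1}+\lambda_{2}\widehat{K}(\xi)$ over $\xi$ is already negative. If $\lambda_{1}<\tfrac{4\pi}{3}\lambda_{2}\le 0$, then $\lambda_{2}\le 0$ and the supremum is $\lambda_{1}-\tfrac{4\pi}{3}\lambda_{2}<0$; if $\lambda_{1}<-\tfrac{8\pi}{3}\lambda_{2}\le 0$, then $\lambda_{2}\ge 0$ and the supremum is $\lambda_{1}+\tfrac{8\pi}{3}\lambda_{2}<0$. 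In either case the integrand is pointwise negative, and since $u\not\equiv 0$ forces $\widehat{f}\not\equiv 0$, the integral is strictly negative. The only mild obstacle is citing the explicit formula for $\widehat{K}$ (a principal-value computation) cleanly; everything else is a short Plancherel argument and a case check.
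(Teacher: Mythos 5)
Your argument is correct and is essentially the standard proof of this result: the paper itself gives no proof but cites \cite[Lemma 2.1]{Wu-Tang 2024}, whose argument is exactly this Fourier diagonalization using $\widehat{K}(\xi)=\frac{4\pi}{3}\bigl(3\cos^{2}\theta(\xi)-1\bigr)\in\bigl[-\frac{4\pi}{3},\frac{8\pi}{3}\bigr]$, the endpoint evaluation of the affine map $t\mapsto\lambda_1+\lambda_2 t$, and the sign check in the two sub-regimes of $\mathcal{B}$. The only cosmetic point is to fix one Fourier normalization consistently (the same Plancherel constant appears in both $\int(K\ast|u|^2)|u|^2$ and $\|u\|_4^4$, so it cancels), and to note that the strict inequality $N(u)<0$ of course presupposes $u\not\equiv 0$, as you observed.
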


\begin{lemma}\cite[Lemma 2.2]{Wu-Tang 2024}\label{B-L}
	If $u_n \rightharpoonup u$ in $H^{1}(\mathbb{R}^{3})$, then up to a subsequence, 
	\begin{align*}
		\int_{{\mathbb{R}}^3}(K\ast|u_n|^{2})|u_n|^{2}dx=\int_{{\mathbb{R}}^3}(K\ast|v_n|^{2})|v_n|^{2}dx+\int_{{\mathbb{R}}^3}(K\ast|u|^{2})|u|^{2}dx+o_n(1),
	\end{align*}
	where $v_n:=u_n-u$ and $o_{n}(1) \rightarrow 0$ as $n \rightarrow +\infty$. In addition, one has
	\begin{align*}
		N(u_n)=N(v_n)+N(u)+o_n(1).
	\end{align*} 
\end{lemma}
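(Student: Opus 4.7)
The plan is to produce $l$ geometrically distinct constrained local minimizers of $J_\varepsilon$ on $S(a)$, each concentrated near a different zero $c_j$ of $V$, and then convert them into weak solutions by the Lagrange multiplier rule. Since $(\lambda_1,\lambda_2)\in\mathcal{B}$ gives $N(u)<0$ by Lemma \ref{N<}, $\lambda_3<0$, and $2<p<\frac{10}{3}$ is mass-subcritical, the Gagliardo-Nirenberg inequality (Lemma \ref{G-N}) shows that on $S(a)$ the nonlinear part of $J_\varepsilon$ is dominated by $\|\nabla u\|_2^{3}$ and $\|\nabla u\|_2^{3(p-2)/2}$ with exponents strictly less than $2$; together with the bound $0\le V\le\|V\|_\infty$ this yields, for $a$ and $\|V\|_\infty$ small, both a global lower bound and a strictly negative infimum on the subset of $S(a)$ singled out in \eqref{set}.

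I would first treat the autonomous limit problem ($V\equiv 0$) in Section \ref{Existence result}: define $m_a:=\inf_{S(a)}J_0$ and establish $m_a\in(-\infty,0)$ for $a\in(0,\bar a]$; then show every minimizing sequence is bounded and, up to a translation, converges strongly in $H^{1}(\mathbb{R}^{3})$ to a nonnegative minimizer $\omega_a$. The key tool here is Lemma \ref{B-L}, which splits the dipolar term between the weak limit and the remainder, combined with Brezis-Lieb for $\|\cdot\|_4^4$ and $\|\cdot\|_p^p$; the strict negativity $m_a<0$ rules out vanishing, and the mass-subcritical structure rules out dichotomy. Positivity of the associated Lagrange multiplier $\mu_a$ follows from the Euler-Lagrange equation combined with the Pohozaev identity, using the signs $N(\omega_a)<0$ and $\lambda_3\|\omega_a\|_p^p<0$.

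For the non-autonomous problem I would, for each $j=1,\ldots,l$, use $\Psi_j^\varepsilon(x):=\omega_a(x-c_j/\varepsilon)$ as a test function; the exponential decay of $\omega_a$, the continuity of $V$ at $c_j$ with $V(c_j)=0$, and $\|V\|_\infty<V_\ast$ force $J_\varepsilon(\Psi_j^\varepsilon)=m_a+o_\varepsilon(1)<0$. Introduce a barycenter-type map $\beta_\varepsilon:S(a)\to\mathbb{R}^{3}$ of the form $\beta_\varepsilon(u)=a^{-2}\int_{\mathbb{R}^{3}}\chi(\varepsilon x)|u|^{2}dx$ with $\chi$ chosen so that $\chi(c_j)=c_j$ near each minimum, and for small $\rho>0$ set
\[
O_\rho^{j}:=\bigl\{u\in S(a)\cap(\text{set }\eqref{set}):|\beta_\varepsilon(u)-c_j|<\rho\bigr\},\qquad j=1,\ldots,l,
\]
which are pairwise disjoint since the $c_j$ are distinct. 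Define $m_\varepsilon^{j}:=\inf_{\overline{O_\rho^{j}}}J_\varepsilon$; the test function $\Psi_j^\varepsilon$ shows $m_\varepsilon^{j}\le m_a+o_\varepsilon(1)$. A minimizing sequence in $O_\rho^{j}$ is then analyzed via Lemma \ref{B-L} and the Brezis-Lieb principle: strong convergence in $H^{1}$ is forced because $m_\varepsilon^{j}<0$, while the alternative (mass escaping to infinity) would contribute non-negative energy from the regions where $V\ge V_\infty-o(1)$ plus an autonomous level at a smaller mass, strictly larger than $m_\varepsilon^{j}$ when $\|V\|_\infty$ is small enough. Passing to the limit and checking that the barycenter of the limit still lies in the open ball $|\cdot -c_j|<\rho$ shows the minimum is attained in the interior of $O_\rho^{j}$; the Lagrange rule applied together with the autonomous positivity argument then yields the couples $(u_j,\mu_j)$ with $\mu_j>0$.

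The main obstacle is coupling the compactness analysis with the exclusion of the boundary scenario $|\beta_\varepsilon(u_n)-c_j|\to\rho$, in the presence of the nonlocal dipolar convolution. The sign restriction $(\lambda_1,\lambda_2)\in\mathcal{B}$, which secures $N<0$, is what keeps the minimization ``downward-biased'' so that a genuine local minimum exists inside each basin $O_\rho^{j}$; the delicate calibration of $\bar a$, $\bar\varepsilon$ and $V_\ast$ needed to simultaneously guarantee (i) $m_\varepsilon^{j}<0$ for each $j$, (ii) strict separation of $m_\varepsilon^{j}$ from the problem-at-infinity level associated with $V_\infty>V_0$, and (iii) that the minimizers remain in their respective basins, constitutes the principal technical content of Section \ref{sec:Multiplicity result}.
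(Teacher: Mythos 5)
Your proposal does not address the statement at all. The statement to be proved is Lemma \ref{B-L}, a Brezis--Lieb type splitting for the nonlocal dipolar energy: if $u_n \rightharpoonup u$ in $H^{1}(\mathbb{R}^{3})$ and $v_n = u_n - u$, then $\int_{\mathbb{R}^{3}}(K\ast|u_n|^{2})|u_n|^{2}\,dx$ decomposes, up to $o_n(1)$, into the corresponding integrals for $v_n$ and $u$, and hence $N(u_n)=N(v_n)+N(u)+o_n(1)$. What you have written instead is an outline of the global variational strategy for Theorem \ref{th1.1} (autonomous limit problem, barycenter map, basins around the zeros of $V$, Lagrange multipliers); Lemma \ref{B-L} appears in your text only as a tool you invoke, never as something you prove. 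Nothing in your proposal engages with the kernel $K$, the convolution structure, or why the cross terms $\int(K\ast|v_n|^{2})|u|^{2}\,dx$ and $\int(K\ast|u|^{2})|v_n|^{2}\,dx$ vanish in the limit.

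For the record, the paper does not prove this lemma either; it is quoted from \cite[Lemma 2.2]{Wu-Tang 2024}. A genuine proof would proceed via Plancherel: the Fourier transform of $K$ is a bounded function of $\xi/|\xi|$ (taking values in $[-\frac{4\pi}{3},\frac{8\pi}{3}]$), so $\int(K\ast f)g\,dx$ is a bounded symmetric bilinear form on $L^{2}$ applied to $f=|u_n|^{2}$ and $g=|u_n|^{2}$. Expanding $|u_n|^{2}=|v_n|^{2}+|u|^{2}+2\,\mathrm{Re}(v_n\bar u)$ (or, in the real-valued setting, $|u_n|^{2}=|v_n|^{2}+|u|^{2}+2v_nu$), one must show the mixed terms tend to zero; this follows because $v_n\to 0$ weakly in $H^{1}$ implies $|v_n|^{2}\rightharpoonup 0$ weakly in $L^{2}$ while $K\ast|u|^{2}$ is a fixed $L^{2}$ function, together with the boundedness of $\widehat K$ to control the remaining products. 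None of these steps is present in your submission, so as a proof of Lemma \ref{B-L} it is vacuous.
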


\begin{lemma}\label{xingzhi g}
	Let $a>0$, $2<p<\frac{10}{3}$ and $\lambda_3<0$. Then for any $a>0$, the function \begin{align}\label{g(a,r)}
		\begin{split}
			g(a,r):=\frac{1}{2}-\frac{\Lambda}{2}C^{4}_{4}ra-\frac{2|\lambda_3|}{p}C^{p}_{p}a^{p(1-\gamma_p)}r^{p\gamma_p-2}
		\end{split}
	\end{align}
	has a unique global maximum point. Moreover, the maximum value satisfies
\begin{align*}
		\max\limits_{r>0}g(a,r)
		\begin{cases}
		>0 \quad& \text{if}\; a<\bar{a},\\
		=0 \quad& \text{if}\; a=\bar{a},\\
		<0 \quad& \text{if}\; a>\bar{a},\\
		\end{cases}
	\end{align*}
where $\bar{a}$ is defined in \eqref{bar a}.
\end{lemma}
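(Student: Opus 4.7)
The plan is to reduce $g(a,\cdot)$ to a one-variable minimization, exploit the algebraic structure to get a clean closed form, and then use homogeneity in $a$ to derive the trichotomy. For fixed $a>0$, I would rewrite
\[
g(a,r)=\tfrac{1}{2}-\Phi_a(r),\qquad \Phi_a(r):=A(a)\,r+B(a)\,r^{-\beta},
\]
with $A(a):=\tfrac{\Lambda}{2}C_4^{4}a$, $B(a):=\tfrac{2|\lambda_3|}{p}C_p^{p}a^{p(1-\gamma_p)}$, and $\beta:=2-p\gamma_p$. A short computation using $\gamma_p=\tfrac{3(p-2)}{2p}$ gives $\beta=\tfrac{10-3p}{2}$, which is strictly positive since $p<10/3$, and clearly $A(a),B(a)>0$.

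Uniqueness of the maximizer then follows from $\Phi_a'(r)=A(a)-\beta B(a)r^{-\beta-1}$, which is strictly increasing on $(0,\infty)$, runs from $-\infty$ to $A(a)>0$, and therefore vanishes at a unique $r_a$ determined by $r_a^{\beta+1}=\beta B(a)/A(a)$. Hence $\Phi_a$ attains its global minimum (and $g(a,\cdot)$ its global maximum) at the unique point $r_a$.

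The crucial step is a clean dependence on $a$. Either by substituting $r_a$ into $\Phi_a$ or by the weighted AM--GM inequality with weights $\tfrac{\beta}{\beta+1}$ and $\tfrac{1}{\beta+1}$, one gets
\[
\Phi_a(r_a)=(\beta+1)\,\beta^{-\beta/(\beta+1)}\,A(a)^{\beta/(\beta+1)}\,B(a)^{1/(\beta+1)}.
\]
Since $A(a)\propto a$ and $B(a)\propto a^{p(1-\gamma_p)}=a^{(6-p)/2}$, the total exponent of $a$ equals
\[
\frac{\beta}{\beta+1}+\frac{1}{\beta+1}\cdot\frac{6-p}{2}=\frac{2\beta+6-p}{2(\beta+1)}=\frac{16-4p}{3(4-p)}=\frac{4}{3},
\]
using $\beta+1=\tfrac{3(4-p)}{2}$. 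Thus $\max_{r>0}g(a,r)=\tfrac{1}{2}-D\,a^{4/3}$ for an explicit constant $D=D(p,\lambda_3,\Lambda,C_4,C_p)>0$ independent of $a$. The trichotomy then follows at once, with $\bar a:=(2D)^{-3/4}$ being the unique positive zero of $a\mapsto \tfrac{1}{2}-Da^{4/3}$.

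The main obstacle I anticipate is the algebraic miracle in the exponent calculation: the cancellation of the factor $4-p$ in the numerator and denominator is what makes the dependence on $a$ a clean monomial $a^{4/3}$ and what yields a single threshold $\bar a$. Everything else is elementary calculus, monotonicity, and a standard AM--GM estimate.
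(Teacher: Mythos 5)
Your proposal is correct and follows essentially the same route as the paper: single-variable calculus locates the unique critical point $r_a$ of $g(a,\cdot)$, substitution yields $\max_{r>0}g(a,r)=\tfrac12-Da^{4/3}$ with $D>0$ independent of $a$, and $\bar a$ is the unique zero of this expression. Your exponent computation $\frac{2\beta+6-p}{2(\beta+1)}=\frac{4}{3}$ matches the paper's $a^{4/3}$ scaling, and your constant $D$ coincides with the paper's $\alpha$; the weighted AM--GM remark is a valid alternative but not needed.
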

\begin{proof}
	In view of $2<p<\frac{10}{3}$, we have $0<p \gamma_p<2$. Together this with \eqref{g(a,r)} yields $g(a,r)\rightarrow -\infty$ as $r\rightarrow 0^{+}$ and $g(a,r)\rightarrow -\infty$ as $r\rightarrow +\infty$ for any $a>0$. Furthermore, we define $f_{a}(r):=g(a,r)$, then it follows that
	\begin{align*}
		f_a'(r)=\frac{\partial (g(a,r))}{\partial r}=-\frac{\Lambda}{2}C^{4}_{4}a-\frac{2|\lambda_3|}{p}(p\gamma_p-2)C^{p}_{p}a^{p(1-\gamma_p)}r^{p\gamma_p-3}.
	\end{align*}
	By letting $f_a'(r)=0$, one has
	\begin{align*}
		r_a=\left(\frac{4|\lambda_3|(2-p \gamma_p)C^{p}_{p}a^{(1-\gamma_p)p-1}}{\Lambda C^{4}_{4}p} \right)^{\frac{1}{3-p \gamma_p}}.
	\end{align*}
Therefore, $r_a$ is the only global maximum point of $g(a,r)$. Then, we have
\begin{align*}
	\begin{split}
		\max\limits_{r>0}g(a,r)=&g(a,r_a)\\
		=&\frac{1}{2}-\frac{\Lambda}{2}C^{4}_{4}r_aa-\frac{2|\lambda_3|}{p}C^{p}_{p}a^{p(1-\gamma_p)}r_a^{p\gamma_p-2}\\
		=&\frac{1}{2}-\frac{\Lambda}{2}C^{4}_{4}a\left(\frac{4|\lambda_3|(2-p \gamma_p)C^{p}_{p}a^{(1-\gamma_p)p-1}}{\Lambda C^{4}_{4}p} \right)^{\frac{1}{3-p \gamma_p}}\\
		&-\frac{2|\lambda_3|}{p}C^{p}_{p}a^{p(1-\gamma_p)}\left(\frac{4|\lambda_3|(2-p \gamma_p)C^{p}_{p}a^{(1-\gamma_p)p-1}}{\Lambda C^{4}_{4}p} \right)^{\frac{p\gamma_p-2}{3-p \gamma_p}}\\
		&=\frac{1}{2}-\frac{1}{2}(\Lambda C^{4}_{4})^{\frac{p \gamma_p-2}{p \gamma_p-3}}\left(\frac{4|\lambda_3|(2-p \gamma_p) C^{p}_{p}}{p} \right)^{\frac{1}{3-p \gamma_p}}\cdot a^{\frac{4}{3}}\\
		&-2\left( \frac{C^{p}_{p}|\lambda_3|}{p}\right) ^{\frac{1}{3-p\gamma_p}}\left(\frac{4(2-p\gamma_p)}{\Lambda C^{4}_{4}} \right)^{\frac{p\gamma_p-2}{3-p \gamma_p}}\cdot a^{\frac{4}{3}}\\
		=&\frac{1}{2}-\alpha a^{\frac{4}{3}},
	\end{split}
\end{align*}
where
\begin{align*}
	\alpha:=\frac{1}{2}(\Lambda C^{4}_{4})^{\frac{p \gamma_p-2}{p \gamma_p-3}}\left(\frac{4|\lambda_3|(2-p \gamma_p) C^{p}_{p}}{p} \right)^{\frac{1}{3-p \gamma_p}}+2\left( \frac{C^{p}_{p}|\lambda_3|}{p}\right) ^{\frac{1}{3-p\gamma_p}}\left(\frac{4(2-p\gamma_p)}{\Lambda C^{4}_{4}} \right)^{\frac{p\gamma_p-2}{3-p \gamma_p}}.
\end{align*}
Thus, we can take 
\begin{align}\label{bar a}
	\bar{a}:=\left(\frac{1}{2\alpha} \right)^{\frac{3}{4}}>0,
\end{align}
which concludes the proof.
\end{proof}

\begin{lemma}\label{2.4}
		Let $a>0$, $2<p<\frac{10}{3}$, $\lambda_3<0$ and $(a_2,r_2)\in (0, \bar{a})\times (0,+\infty)$ satisfy $g(a_1,r_1)\ge 0$. Then for any $a_2 \in (0,a_1]$, there holds
	\begin{align}\label{3}
		g(a_2,r_2)\ge 0\quad \text{if}\; r_2\in \left[\frac{a_2}{a_1}r_1,r_1 \right].
	\end{align}
\end{lemma}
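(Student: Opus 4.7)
My plan is to reduce everything to showing that each of the two negative terms in $g(a_2,r_2)$ is bounded above by the corresponding term in $g(a_1,r_1)$, after which the hypothesis $g(a_1,r_1)\ge 0$ will force $g(a_2,r_2)\ge 0$ immediately. To this end, I would set $A:=\tfrac{\Lambda}{2}C_4^4$ and $B:=\tfrac{2|\lambda_3|}{p}C_p^p$, so
\[
g(a,r)=\tfrac12-A\,r\,a-B\,a^{p(1-\gamma_p)}\,r^{p\gamma_p-2},
\]
and introduce the scaling parameter $t:=a_2/a_1\in(0,1]$, so that $a_2=ta_1$ and, by hypothesis, $r_2\in[tr_1,r_1]$.

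For the quartic term, the estimate is trivial by monotonicity: since $r_2\le r_1$ and $a_2\le a_1$, one has $A\,r_2 a_2\le A\,r_1 a_1$.

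The heart of the argument is the $p$-term. The identity $\gamma_p=\tfrac{3(p-2)}{2p}$ gives $p\gamma_p-2=\tfrac{3p-10}{2}$, which is \emph{strictly negative} for $p<10/3$; this is exactly where the mass subcritical hypothesis enters. Since the exponent is negative, $r\mapsto r^{p\gamma_p-2}$ is decreasing, so $r_2\ge tr_1$ yields $r_2^{p\gamma_p-2}\le(tr_1)^{p\gamma_p-2}$. Combined with $a_2=ta_1$, this gives
\[
a_2^{p(1-\gamma_p)}\,r_2^{p\gamma_p-2}\le t^{p(1-\gamma_p)}\,t^{p\gamma_p-2}\,a_1^{p(1-\gamma_p)}\,r_1^{p\gamma_p-2}=t^{p-2}\,a_1^{p(1-\gamma_p)}\,r_1^{p\gamma_p-2}.
\]
Since $p>2$ and $t\in(0,1]$, the factor $t^{p-2}\le 1$, and therefore $B\,a_2^{p(1-\gamma_p)}r_2^{p\gamma_p-2}\le B\,a_1^{p(1-\gamma_p)}r_1^{p\gamma_p-2}$.

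Adding the two estimates yields $g(a_2,r_2)\ge g(a_1,r_1)\ge 0$, which is the desired conclusion. There is no real obstacle here; the only subtle point is the cancellation of the exponents of $t$ into the single factor $t^{p-2}$, and it is precisely this cancellation that dictates the choice of window $[tr_1,r_1]$: a lower endpoint smaller than $tr_1$ would let $r_2^{p\gamma_p-2}$ grow without an offsetting shrinkage from $a_2^{p(1-\gamma_p)}$, so the scaling $r_2\ge(a_2/a_1)r_1$ is exactly what is needed to keep the $p$-term under control.
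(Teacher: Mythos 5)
Your proof is correct, and it takes a genuinely different route from the paper's. The paper only verifies nonnegativity at the two endpoints of the window: it uses the monotonicity of $a\mapsto g(a,r)$ to get $g(a_2,r_1)\ge g(a_1,r_1)\ge 0$, computes directly that $g(a_2,\tfrac{a_2}{a_1}r_1)-g(a_1,r_1)\ge 0$, and then invokes the unimodality of $r\mapsto g(a_2,r)$ established in Lemma \ref{xingzhi g} (unique global maximum, $-\infty$ limits at $0^+$ and $+\infty$) to conclude nonnegativity on the whole interval $\left[\tfrac{a_2}{a_1}r_1,r_1\right]$. You instead dominate each negative term of $g(a_2,r_2)$ by the corresponding term of $g(a_1,r_1)$ for \emph{every} $r_2$ in the window, using $r_2a_2\le r_1a_1$ for the quartic term and the exponent cancellation $t^{p(1-\gamma_p)}t^{p\gamma_p-2}=t^{p-2}\le 1$ (valid precisely because $p\gamma_p-2<0$ forces the evaluation at the left endpoint $tr_1$ and because $p>2$) for the $p$-term. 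This yields the stronger pointwise inequality $g(a_2,r_2)\ge g(a_1,r_1)$ and makes the lemma self-contained, with no appeal to the shape of $g$; the paper's endpoint-plus-unimodality argument buys nothing extra here beyond reusing Lemma \ref{xingzhi g}, which it needs anyway for other purposes.
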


\begin{proof}
	By \eqref{g(a,r)}, one has $a\mapsto g(a,r)$ is decreasing for $a>0$. Therefore, we have
	\begin{align}\label{1}
		g(a_2,r_1)\ge g(a_1,r_1)\ge0.
	\end{align}
	  Furthermore, in view of $a_2 \in (0,a_1]$, one has
	\begin{align}\label{2}
		\begin{split}
		g(a_2,\frac{a_2}{a_1}r_1)-g(a_1,r_1)=&
		\frac{1}{2}-\frac{\Lambda}{2}C^{4}_{4}\frac{a_2^{2}}{a_1}r_1-\frac{2|\lambda_3|}{p}C^{p}_{p}a^{p(1-\gamma_p)}_{2}\left(\frac{a_2}{a_1}r_1 \right)^{p\gamma_p-2}\\
		&-\left(\frac{1}{2}-\frac{\Lambda}{2}C^{4}_{4}r_1a_1-\frac{2|\lambda_3|}{p}C^{p}_{p}a^{p(1-\gamma_p)}_{1}r_1^{p\gamma_p-2}\right) \\
		=&\frac{\Lambda}{2}C^{4}_{4}a_1r_1\left(1-\frac{a_2^{2}}{a_1^{2}} \right)+\frac{2|\lambda_3|}{p}C^{p}_{p}r_1^{p\gamma_p-2}a_1^{(1-\gamma_p)p}\left[1-\left(\frac{a_2}{a_1} \right)^{p-2}  \right]\\
		\ge&0. 
		\end{split}
	\end{align}
Then it follows from Lemma \ref{xingzhi g} and \eqref{1}-\eqref{2} that \eqref{3} holds, which concludes the proof.
\end{proof}


\section{Existence result for the autonomous problem}\label{Existence result}
\setcounter{section}{3}
\setcounter{equation}{0}
In this section, we establish the existence of normalized solutions of the autonomous case of problem \eqref{eq:main-01}, that is, 
	\begin{align}\label{eq:zizhi}
	\left\{
	\begin{array}{lll}
		-\frac{1}{2}\Delta u+\mu u+\omega u+\lambda_1|u|^{2}u+\lambda_2(K\ast|u|^{2})u+\lambda_3|u|^{p-2}u=0, \quad&\text{in}\;  \mathbb{R}^{3},\\
		\int_{{\mathbb{R}}^3}|u|^{2}dx=a^{2},
	\end{array}\right.
\end{align}
where $a>0$, $\omega \ge 0$, $2<p<\frac{10}{3}$,
$(\lambda_1,\lambda_2) \in\mathcal{B}$, $\lambda_3<0$   and $\mu \in \mathbb{R}$ denotes the Lagrange multiplier. Solutions of problem \eqref{eq:zizhi} are the critical points of
the corresponding energy functional $J_{\omega}:H^{1}(\mathbb{R}^{3})\rightarrow \mathbb{R}$ constrained to \eqref{Sa}, where
\begin{align*}
	\begin{split}			J_{\omega}(u):=\frac{1}{2}\int_{{\mathbb{R}}^3}|\nabla u|^{2}dx+\int_{{\mathbb{R}}^3} \omega|u|^{2}dx+\frac{1}{2}N(u)+\frac{2\lambda_3}{p}\int_{{\mathbb{R}}^3}|u|^{p}dx.
	\end{split}
\end{align*}

Our main result in this section is as follows.
\begin{theorem}\label{th3.1}
	Let $(\lambda_1,\lambda_2) \in \mathcal{B}$, $\lambda_{3}<0$ and $2<p<\frac{10}{3}$. Suppose that there is $V_{\ast}>0$ satisfying $\omega<V_{\ast}$. Then there exist $\bar{a},\bar{r}>0$ such that for any $a \in (0,\bar{a})$, the functional $J_{\omega}$ has an interior local minimizer on 
	\begin{align}\label{set}
		W^{a}_{\bar{r}}:=\{u \in S(a):\|\nabla u\|_{2}<\bar{r}\}.
	\end{align}
	Moreover, equation \eqref{eq:zizhi} has a couple $(u_a,\mu_a)$ solution with $J_{\omega}(u_a)<0$, where $u_a$ is positive and $\mu_a>0$.
\end{theorem}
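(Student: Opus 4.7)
The plan is to construct $u_a$ as a local minimizer of $J_\omega$ inside the ``well'' $W^a_{\bar r}$ carved out by the Gagliardo--Nirenberg structure $g(a,r)$, and then upgrade the resulting Palais--Smale sequence to a strong limit via a concentration--compactness argument; this mirrors the blueprint of Wu--Tang 2024 adapted to the autonomous functional $J_\omega$.

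\textbf{Geometry on $W^a_{\bar r}$.} For $u \in S(a)$ with $\|\nabla u\|_2 = r$, Lemma 2.2 together with the Gagliardo--Nirenberg inequality applied to $\|u\|_4^4$ and $\|u\|_p^p$ yields the pointwise bound $J_\omega(u) \ge r^2 g(a,r) + \omega a^2$. By Lemma 2.3, for $a \in (0,\bar a)$ the function $r \mapsto g(a,r)$ has a strictly positive maximum, so one can pick $\bar r$ in its positive region, making $J_\omega$ uniformly bounded below by a positive constant on $\partial W^a_{\bar r}$. For a strictly negative value inside, I would test with the $L^2$-preserving dilation $u_{0,s}(x) = s^{3/2} u_0(sx)$ of a nonzero $u_0 \in S(a)$. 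Because $\|u_{0,s}\|_p^p = s^{p\gamma_p}\|u_0\|_p^p$ with $p\gamma_p < 2$, $N(u_{0,s}) = s^3 N(u_0) < 0$, and the kinetic term is only quadratic in $s$, the map
\begin{equation*}
s \mapsto \tfrac{s^2}{2}\|\nabla u_0\|_2^2 + \tfrac{s^3}{2} N(u_0) + \tfrac{2\lambda_3}{p} s^{p\gamma_p}\|u_0\|_p^p
\end{equation*}
attains a strictly negative minimum $-m_0 < 0$ at some $s_*$ with $s_* \|\nabla u_0\|_2 < \bar r$. Setting $V_* := m_0/a^2$, any $\omega < V_*$ then gives $J_\omega(u_{0,s_*}) < 0$, so that $m_{\omega,a} := \inf_{W^a_{\bar r}} J_\omega < 0 < \inf_{\partial W^a_{\bar r}} J_\omega$.

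\textbf{PS sequence and compactness (the main obstacle).} The strict separation of infima allows Ekeland's variational principle to supply a $(PS)_{m_{\omega,a}}$ sequence $(u_n) \subset W^a_{\bar r}$ with bounded Lagrange multipliers $\mu_n$. Vanishing is ruled out, since by P.\,L.~Lions' lemma it would give $\|u_n\|_q \to 0$ for $q \in (2,6)$ and then, through Lemma 2.2, $\liminf J_\omega(u_n) \ge \omega a^2 \ge 0 > m_{\omega,a}$. After a translation one gets $u_n \rightharpoonup u \neq 0$ in $H^1$, $\mu_n \to \mu$, and $u$ satisfies the Euler--Lagrange equation weakly. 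The main difficulty is ruling out mass loss: letting $v_n = u_n - u$, Brezis--Lieb and Lemma 2.3 yield the asymptotic splittings $J_\omega(u_n) = J_\omega(u) + J_\omega(v_n) + o(1)$ and $\|u_n\|_2^2 = \|u\|_2^2 + \|v_n\|_2^2 + o(1)$. If $\|u\|_2 =: b < a$, applying the vanishing argument to $v_n$ forces $J_\omega(v_n) \ge \omega(a^2 - b^2) + o(1)$ and hence $J_\omega(u) \le m_{\omega,a}$; a scaling comparison with the analogous value $m_{\omega,b}$ on $W^b_{\bar r}$, based on the decreasing dependence of the local-minimum value in the mass parameter (obtained by re-running the test-function argument above), then yields the desired contradiction. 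Thus $\|u\|_2 = a$, $u_n \to u$ strongly in $L^2$, and the equation combined with $J_\omega'(u_n) - \mu_n u_n \to 0$ upgrades this to $H^1$ convergence.

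\textbf{Sign of $\mu$ and positivity.} Since $J_\omega(|u|) = J_\omega(u)$, we may assume $u \ge 0$. Testing the equation with $u$ gives $2\mu a^2 = -\|\nabla u\|_2^2 - 2\omega a^2 + 2|N(u)| + 2|\lambda_3|\|u\|_p^p$; combining with $2 J_\omega(u) = \|\nabla u\|_2^2 + 2\omega a^2 - |N(u)| - \tfrac{4|\lambda_3|}{p}\|u\|_p^p < 0$ and using $p > 2$ yields $\mu > 0$. The strong maximum principle applied to the resulting elliptic equation $-\tfrac{1}{2}\Delta u + W(x) u = 0$ with bounded coefficient $W$ then gives $u > 0$ in $\mathbb{R}^3$.
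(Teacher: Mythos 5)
Your overall route is the same as the paper's: the well $W^a_{\bar r}$ carved out by $g(a,r)$, the strict separation $\inf_{W^a_{\bar r}}J_\omega<0<\inf_{\partial W^a_{\bar r}}J_\omega$ obtained from the mass-preserving dilation (with $V_\ast$ chosen from the negative value so attained), a minimizing/PS sequence, and the final Lagrange-multiplier and maximum-principle steps. Those parts are sound and match Lemmas \ref{J in S}, \ref{jixiao<0} and the proof of Theorem \ref{th3.1}. The gap is in the compactness step, exactly where you flag ``the main obstacle''. Having extracted $u_n\rightharpoonup u\neq 0$ with $\|u\|_2=b<a$ and set $v_n=u_n-u$, you assert that ``applying the vanishing argument to $v_n$ forces $J_\omega(v_n)\ge \omega(a^2-b^2)+o(1)$''. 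This does not follow: $v_n\rightharpoonup 0$ weakly, but $v_n$ need not vanish in Lions' sense. In the genuine dichotomy scenario $v_n$ is, up to $o(1)$, a fixed profile of mass $a^2-b^2$ translated to infinity, in which case $J_\omega(v_n)$ can be as negative as roughly $\Upsilon_\omega(\sqrt{a^2-b^2})<0$, and your lower bound --- hence the conclusion $J_\omega(u)\le m_{\omega,a}$ --- collapses. Your subsequent ``scaling comparison'' is applied only to $u$ against $m_{\omega,b}$ and does not close this case.

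What is needed, and what the paper supplies, is the strict subadditivity of Lemma \ref{budengshi}: from the scaling bound $\Upsilon_\omega(\theta a_1)\le\theta^2\Upsilon_\omega(a_1)$ in \eqref{b} (itself a consequence of $N<0$ and $p>2$, see \eqref{5}) one gets $\Upsilon_\omega(a)<\Upsilon_\omega(b)+\Upsilon_\omega(\sqrt{a^2-b^2})$ once one of the pieces is attained, and then the Brezis--Lieb splitting gives $J_\omega(u_n)=J_\omega(u)+J_\omega(v_n)+o(1)\ge\Upsilon_\omega(b)+\Upsilon_\omega(\sqrt{a^2-b^2})+o(1)>\Upsilon_\omega(a)+o(1)$, a contradiction covering the dichotomy case as well. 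Note that this uses $u\in W^{b}_{\bar r}$ and $v_n\in W^{\|v_n\|_2}_{\bar r}$, whose verification is not automatic and is handled by Lemma \ref{2.4} and Step 1 of Lemma \ref{lem3.4}; the continuity of $a\mapsto\Upsilon_\omega(a)$ (Lemma \ref{lem3.4}) is also needed to pass to the limit in the masses. With that subadditivity machinery inserted, the remainder of your outline (Fatou's lemma to upgrade to $J_\omega(u)=\Upsilon_\omega(a)$ and strong $H^1$ convergence, positivity of $u$ and of $\mu$) goes through as in the paper.
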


To prove the above theorem, we divide the proof into the following lemmas.

\begin{lemma}\label{J in S}
 For any $u \in S(a)$, there holds 
\begin{align*}
	J_{\omega}(u) \ge \|\nabla u\|^{2}_{2}g(a,\|\nabla u\|_{2}).
\end{align*}  	

\end{lemma}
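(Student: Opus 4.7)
The plan is to bound $J_{\omega}(u)$ term by term from below, using the definition of $g(a,r)$ as the target expression and then factoring out $\|\nabla u\|_2^2$.

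First I would write out $J_{\omega}(u)$ and handle each of the four summands. The potential term $\int_{\mathbb{R}^3} \omega |u|^2\,dx = \omega a^2$ is nonnegative (since $\omega \ge 0$ and $u \in S(a)$), so it can simply be discarded for a lower bound. Since $\lambda_3 < 0$, the $L^p$ term equals $-\tfrac{2|\lambda_3|}{p}\|u\|_p^p$. For the nonlocal dipolar term, I would invoke Lemma \ref{N<} to get $\tfrac{1}{2}N(u) \ge -\tfrac{1}{2}|N(u)| \ge -\tfrac{\Lambda}{2}\|u\|_4^4$. This yields the intermediate bound
\begin{equation*}
J_{\omega}(u) \;\ge\; \tfrac{1}{2}\|\nabla u\|_2^2 \;-\; \tfrac{\Lambda}{2}\|u\|_4^4 \;-\; \tfrac{2|\lambda_3|}{p}\|u\|_p^p.
\end{equation*}

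Next I would apply the Gagliardo--Nirenberg inequality from Lemma \ref{G-N} with $q = 4$ and $q = p$. The key arithmetic to track is that $\gamma_4 = \tfrac{3}{4}$, so $4\gamma_4 = 3$ and $4(1-\gamma_4) = 1$; hence $\|u\|_4^4 \le C_4^4 \|\nabla u\|_2^3 a$ after using $\|u\|_2 = a$. Similarly $\|u\|_p^p \le C_p^p \|\nabla u\|_2^{p\gamma_p} a^{p(1-\gamma_p)}$. Plugging these into the bound above gives
\begin{equation*}
J_{\omega}(u) \;\ge\; \tfrac{1}{2}\|\nabla u\|_2^2 \;-\; \tfrac{\Lambda}{2} C_4^4\, a\, \|\nabla u\|_2^3 \;-\; \tfrac{2|\lambda_3|}{p} C_p^p\, a^{p(1-\gamma_p)}\, \|\nabla u\|_2^{p\gamma_p}.
\end{equation*}

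Finally I would factor $\|\nabla u\|_2^2$ out of the right-hand side; the bracket that remains is exactly $g(a, \|\nabla u\|_2)$ by the definition \eqref{g(a,r)}, which closes the proof. There is no real obstacle here; the argument is a straightforward combination of Lemma \ref{N<} and the Gagliardo--Nirenberg inequality, and the only thing to watch is the exponent computation $4\gamma_4 = 3$ which makes the $\|u\|_4^4$ estimate match the first nontrivial summand of $g(a, r)$.
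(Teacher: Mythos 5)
Your proposal is correct and follows exactly the paper's own argument: discard the nonnegative $\omega$-term, bound $\tfrac{1}{2}N(u)\ge -\tfrac{\Lambda}{2}\|u\|_4^4$ via Lemma \ref{N<}, apply Gagliardo--Nirenberg with $q=4$ (where $4\gamma_4=3$, $4(1-\gamma_4)=1$) and $q=p$, and factor out $\|\nabla u\|_2^2$. Nothing further is needed.
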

\begin{proof}
	For any $u \in S(a)$, it follows from \eqref{g(a,r)}, Lemmas \ref{G-N} and \ref{N<} that
\begin{align*}
	\begin{split}
	J_{\omega}(u)=&\frac{1}{2}\int_{{\mathbb{R}}^3}|\nabla u|^{2}dx+\int_{{\mathbb{R}}^3}\omega|u|^{2}dx+\frac{1}{2}N(u)+\frac{2\lambda_3}{p}\int_{{\mathbb{R}}^3}|u|^{p}dx\\
	\ge&\frac{1}{2}\int_{{\mathbb{R}}^3}|\nabla u|^{2}dx-\frac{\Lambda}{2}\|u\|^{4}_{4}+\frac{2\lambda_3}{p}\int_{{\mathbb{R}}^3}|u|^{p}dx\\
	\ge& \frac{1}{2}\|\nabla u\|^{2}_{2}-\frac{\Lambda}{2}C^{4}_{4}\|\nabla u\|^{3}_{2}a-\frac{2|\lambda_3|}{p}C^{p}_{p}a^{p(1-\gamma_p)}\|\nabla u\|^{p\gamma_p}_{2}\\
	=&\|\nabla u\|^{2}_{2}\left[\frac{1}{2}-\frac{\Lambda}{2}C^{4}_{4}\|\nabla u\|_{2}a-\frac{2|\lambda_3|}{p}C^{p}_{p}a^{p(1-\gamma_p)}\|\nabla u\|^{p\gamma_p-2}_{2} \right]\\
	=&\|\nabla u\|^{2}_{2}g(a,\|\nabla u\|_{2}). 
	\end{split}
\end{align*} 
The proof is finished.
\end{proof}

In the following, we define
\begin{align*}
	W^{a}_{\bar{r}}:=\{u \in S(a):\|\nabla u\|_{2}<\bar{r}\},
\end{align*}
where 
\begin{align}\label{bar r}
	\bar{r}:=r_{\bar{a}}=\left(\frac{4|\lambda_3|(2-p \gamma_p)C^{p}_{p}\bar{a}^{(1-\gamma_p)p-1}}{\Lambda C^{4}_{4}p} \right)^{\frac{1}{3-p \gamma_p}}>0.
\end{align}
Obviously, it follows from Lemmas \ref{xingzhi g} and \ref{J in S} that $J_{\omega}$ is bounded from below in $W^{a}_{\bar{r}}$ for any $a\in(0,\bar{a})$, where $\bar{a}$ is given by \eqref{bar a}. Then one can define the local minimization problem by
\begin{align*}
	\Upsilon_{\omega}(a):=\inf\limits_{u\in W^{a}_{\bar{r}}}J_{\omega}(u) \; \text{for any}\; a\in(0,\bar{a}).
\end{align*}
Moreover, $\Upsilon_{\omega}(a)$ satisfies the following properties.

\begin{lemma}\label{jixiao<0}
Let $2<p<\frac{10}{3}$ and $\lambda_3<0$. For any $a\in (0,\bar{a})$, there exists $V_{\ast}>0$ such that
\begin{align*}
	\Upsilon_{\omega}(a)<0<\inf\limits_{u\in \partial W^{a}_{\bar{r}}}J_{\omega}(u),\quad\text{if}\;\; \omega<V_{\ast}.
\end{align*}
\end{lemma}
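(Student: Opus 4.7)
The plan is to prove the two inequalities in the statement separately and in reverse order (boundary first, interior minimum second). For the boundary estimate $\inf_{u\in \partial W^a_{\bar r}} J_\omega(u) > 0$, I would invoke Lemma \ref{J in S}, which already discards the non-negative $\omega$-term, and reduce the question to proving $g(a,\bar r) > 0$ for every $a\in(0,\bar a)$. The definition of $\bar r = r_{\bar a}$ combined with Lemma \ref{xingzhi g} gives $g(\bar a, \bar r) = 0$, so it suffices to observe that, for fixed $r>0$, the map $a\mapsto g(a,r)$ is strictly decreasing on $(0,+\infty)$ (immediate from \eqref{g(a,r)}, since $p(1-\gamma_p)>0$ makes both negative coefficients strictly grow in $a$). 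This yields $g(a,\bar r) > 0$ and hence the uniform positive lower bound $\bar r^{\,2} g(a,\bar r)$ on the boundary, independently of $\omega \ge 0$.

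For the strict negativity $\Upsilon_\omega(a) < 0$, the idea is to exhibit a single test function in $W^a_{\bar r}$ with negative $J_0$-energy and then absorb the $\omega$-term into the choice of $V_\ast$. Pick any fixed $v \in S(a)$ and consider the $L^2$-preserving dilation $v_t(x) := t^{3/2} v(tx)$. A direct computation (using that $N$ is $3$-homogeneous under this scaling, by the $-3$-homogeneity of $K$) gives
\begin{align*}
J_\omega(v_t) = \tfrac{t^2}{2}\|\nabla v\|_2^2 + \omega a^2 + \tfrac{t^3}{2} N(v) + \tfrac{2\lambda_3}{p}\,t^{3(p-2)/2}\|v\|_p^p.
\end{align*}
The assumption $2<p<\tfrac{10}{3}$ forces $3(p-2)/2 \in (0,2)$, and since $\lambda_3 < 0$ the last term is negative and carries the smallest exponent in $t$, so it dominates as $t\to 0^+$; in particular $J_0(v_t)<0$ and simultaneously $\|\nabla v_t\|_2 = t\|\nabla v\|_2 < \bar r$ for all sufficiently small $t$. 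Fixing such a $t_0$ and setting $V_\ast := -J_0(v_{t_0})/a^2 > 0$, for any $\omega < V_\ast$ one obtains $J_\omega(v_{t_0}) = J_0(v_{t_0}) + \omega a^2 < 0$, whence $\Upsilon_\omega(a) \le J_\omega(v_{t_0}) < 0$.

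The main subtlety will be confirming the strict inequality $g(a,\bar r) > 0$, because Lemma \ref{2.4} only supplies the non-strict version; however, the strictness pops out directly from the monotonicity-in-$a$ observation above and requires no separate argument. Beyond this single point, everything reduces to comparing fixed powers of $t$ (for the upper inequality) and of $a$ (for the lower one), with sign constraints supplied by Lemmas \ref{N<} and \ref{xingzhi g}.
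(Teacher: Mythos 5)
Your proof is correct and follows essentially the same route as the paper: the dilation $t^{3/2}v(tx)$ with the exponent comparison $p\gamma_p=\tfrac{3(p-2)}{2}<2$ to make $\Upsilon_\omega(a)<0$ after absorbing $\omega a^2$ into $V_\ast$, and Lemma \ref{J in S} together with $g(a,\bar r)>0$ on the boundary. The only (harmless) deviations are that you track the exact $t^3$-homogeneity of $N$ where the paper simply discards $N<0$ via Lemma \ref{N<}, you normalize $V_\ast$ by $a^2$ rather than $\bar a^2$, and you spell out the strict monotonicity of $a\mapsto g(a,\bar r)$ that the paper leaves implicit.
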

\begin{proof}
	Fixed $u\in S(a)$, we set $u_t(x)=t^{\frac{3}{2}}u(tx)$ for all $t>0$.
	Then, we have
	\begin{align*}
		\int_{{\mathbb{R}}^3}|u_{t}|^{2}dx=a^{2},\;\int_{{\mathbb{R}}^3}|\nabla u_{t}|^{2}dx=t^{2}\int_{{\mathbb{R}}^3}|\nabla u|^{2}dx\;\;
		 \text{and}\; \int_{{\mathbb{R}}^3}|u_{t}|^{p}dx=t^{\gamma_p p}\int_{{\mathbb{R}}^3}|u|^{p}dx.
	\end{align*}
Combining these with Lemma \ref{N<}, one has
\begin{align*}
	\begin{split}
		J_{\omega}(u_t)=&\frac{1}{2}\int_{{\mathbb{R}}^3}|\nabla u_t|^{2}dx+\int_{{\mathbb{R}}^3}\omega|u_t|^{2}dx+\frac{1}{2}N(u_t)-\frac{2|\lambda_3|}{p}\int_{{\mathbb{R}}^3}|u_t|^{p}dx\\
		<&\frac{t^{2}}{2}\int_{{\mathbb{R}}^3}|\nabla u|^{2}dx+\omega a^{2}-\frac{2|\lambda_3|}{p}t^{p\gamma_p}\int_{{\mathbb{R}}^3}|u|^{p}dx\\
		=&t^{p\gamma_p}\left[\frac{t^{2-p\gamma_p}}{2}\int_{{\mathbb{R}}^3}|\nabla u|^{2}dx-\frac{2|\lambda_3|}{p}\int_{{\mathbb{R}}^3}|u|^{p}dx \right]+\omega a^{2}.
	\end{split}
\end{align*}
Then by $0<p\gamma_p<2$ and $u\in W^{a}_{\bar{r}}$, there exists $t_0>0$ small enough such that
\begin{align*}
	t^{p\gamma_p}\left[\frac{t^{2-p\gamma_p}}{2}\int_{{\mathbb{R}}^3}|\nabla u|^{2}dx-\frac{2|\lambda_3|}{p}\int_{{\mathbb{R}}^3}|u|^{p}dx \right]:=\kappa<0.
\end{align*}
Furthermore, we let $V_{\ast}:=\frac{-\kappa}{\bar{a}^{2}}$. Then based on the above facts, we have if $\omega<V_{\ast}$,
\begin{align*}
	J_{\omega}(u_t)<\kappa-\kappa=0.
\end{align*}
It follows that $	\Upsilon_{\omega}(a)<0$ if $\omega<V_{\ast}$. On the other hand, if $u\in \partial W^{a}_{\bar{r}}$, one has $u\in S(a)$ and $\|\nabla u\|_{2}=\bar{r}$. Together these with Lemmas \ref{xingzhi g} and \ref{J in S} yield
\begin{align*}
	J_{\omega}(u)\ge \|\nabla u\|^{2}_{2}g(a,\|\nabla u\|_{2})=\bar{r}^{2}g(a,\bar{r})>0 \; \text{for any}\; a\in(0,\bar{a}),
\end{align*}
which implies $\inf\limits_{u\in \partial W^{a}_{\bar{r}}}J_{\omega}(u)>0$ for any $a\in(0,\bar{a})$. We complete the proof.
\end{proof}

From now on, we fix $\omega<V_{\ast}$, which is defined in Lemma \ref{jixiao<0}.
\begin{lemma}\label{lem3.4}
	 $\Upsilon_{\omega}(a)$ is continuous with regard to $a\in (0,\bar{a})$.
\end{lemma}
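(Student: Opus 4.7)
The plan is to prove continuity by sandwiching: for any sequence $a_n \to a$ in $(0,\bar a)$ I will establish both $\limsup_n \Upsilon_\omega(a_n) \le \Upsilon_\omega(a)$ and $\liminf_n \Upsilon_\omega(a_n) \ge \Upsilon_\omega(a)$ by shuttling near-optimal candidates between $W^a_{\bar r}$ and $W^{a_n}_{\bar r}$ via a rescaling. The naive multiplication $v \mapsto (b/a) v$ is unsuitable because it multiplies $\|\nabla \cdot\|_2$ by $b/a$, so a candidate of mass $a$ can be pushed outside $W^{a_n}_{\bar r}$ once $a_n > a$ (and symmetrically on the other side). Instead, the key device will be the gradient-preserving, mass-adjusting dilation
\[
w(x) := \tau^{1/2} v(\tau x), \qquad \tau > 0,
\]
for which an elementary change of variables gives $\|w\|_2 = \tau^{-1}\|v\|_2$ and, crucially, $\|\nabla w\|_2 = \|\nabla v\|_2$.

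For the upper bound I fix $\eta > 0$, pick $u \in W^a_{\bar r}$ with $J_\omega(u) \le \Upsilon_\omega(a) + \eta$, set $\tau_n := a/a_n$, and define $\tilde u_n(x) := \tau_n^{1/2} u(\tau_n x)$. The identities above place $\tilde u_n \in S(a_n)$ with $\|\nabla \tilde u_n\|_2 = \|\nabla u\|_2 < \bar r$, hence $\tilde u_n \in W^{a_n}_{\bar r}$ for every $n$. Using the degree-$(-3)$ homogeneity of $K$, the same change of variables yields $N(\tilde u_n) = \tau_n^{-1} N(u)$ and $\|\tilde u_n\|_p^p = \tau_n^{p/2-3}\|u\|_p^p$; since $\tau_n \to 1$, every term of $J_\omega(\tilde u_n)$ converges to the corresponding term of $J_\omega(u)$. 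Therefore $\limsup_n \Upsilon_\omega(a_n) \le \lim_n J_\omega(\tilde u_n) = J_\omega(u) \le \Upsilon_\omega(a) + \eta$, and letting $\eta \to 0$ closes this direction.

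For the reverse inequality I select near-minimizers $u_n \in W^{a_n}_{\bar r}$ with $J_\omega(u_n) \le \Upsilon_\omega(a_n) + 1/n$ and apply the dilation in the opposite direction: with $\sigma_n := a_n/a$, the function $\tilde u_n(x) := \sigma_n^{1/2} u_n(\sigma_n x)$ lies in $S(a)$ with $\|\nabla \tilde u_n\|_2 = \|\nabla u_n\|_2 < \bar r$, so $\tilde u_n \in W^a_{\bar r}$ automatically. Because $\|u_n\|_2 = a_n$ and $\|\nabla u_n\|_2 < \bar r$ are uniformly bounded, Lemma \ref{G-N} and Lemma \ref{N<} force $\|u_n\|_p^p$ and $|N(u_n)|$ to be uniformly bounded, and the same scaling identities give $J_\omega(\tilde u_n) - J_\omega(u_n) \to 0$. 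Consequently $\Upsilon_\omega(a) \le J_\omega(\tilde u_n) = J_\omega(u_n) + o(1) \le \Upsilon_\omega(a_n) + 1/n + o(1)$, and taking $\liminf_n$ yields the matching bound.

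The only conceptual obstacle I anticipate is precisely this constraint-preservation step: with any scaling that alters the gradient norm the rescaled candidate may leave the admissible set $W^{\cdot}_{\bar r}$ as soon as the mass moves in the wrong direction, which would break the two-sided variational comparison. Choosing $w(x) = \tau^{1/2} v(\tau x)$ removes that obstruction once and for all, after which every remaining estimate is a routine change of variables powered by the $-3$-homogeneity of $K$ and standard $L^p$-scaling.
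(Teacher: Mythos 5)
Your proof is correct, and it takes a genuinely different route from the paper's. The paper rescales by plain multiplication, $w_n=\frac{a}{a_n}u_n$, which changes $\|\nabla\cdot\|_2$ by the factor $\frac{a}{a_n}$; to guarantee that $w_n$ stays inside $W^a_{\bar r}$ when $a>a_n$, the paper must invoke the auxiliary monotonicity result (Lemma \ref{2.4}) together with the negativity of $J_\omega(u_n)$ and the lower bound $J_\omega(u_n)\ge\|\nabla u_n\|_2^2\,g(a_n,\|\nabla u_n\|_2)$ to deduce $\|\nabla u_n\|_2<\frac{a_n}{a}\bar r$. Your gradient-preserving dilation $w(x)=\tau^{1/2}v(\tau x)$ eliminates this issue at the source: the candidate never leaves the gradient ball, so membership in $W^{a_n}_{\bar r}$ (resp.\ $W^{a}_{\bar r}$) is automatic in both directions, and no structural information about $g(a,r)$ or about the sign of the energy of the near-minimizers is needed. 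The price is that you must track how each term of $J_\omega$ scales under the dilation, including the nonlocal term, but your homogeneity computation $N(w)=\tau^{-1}N(v)$ (using $K(sx)=s^{-3}K(x)$ and $\theta(sx)=\theta(x)$) and the $L^p$-scaling $\|w\|_p^p=\tau^{p/2-3}\|v\|_p^p$ are both correct, all exponents tend to $1$ as $\tau\to 1$, and the uniform bounds on $\|u_n\|_p^p$ and $|N(u_n)|$ from Lemmas \ref{G-N} and \ref{N<} justify passing to the limit in the lower-bound direction. In short, your argument is cleaner and self-contained, while the paper's multiplication trick has the minor advantage of reusing Lemma \ref{2.4}, which it needs elsewhere (e.g.\ in Lemma \ref{budengshi}).
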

\begin{proof}
	For any $a\in (0,\bar{a})$, let $\{a_n\}\subset (0,\bar{a})$ satisfy $a_n \rightarrow a$ as $n\rightarrow +\infty$. Then by the definition of $\Upsilon_{\omega}(a_n)$ and Lemma \ref{jixiao<0}, there exists $\{u_n\}\subset W^{a_n}_{\bar{r}}$ satisfying for all $n\in \mathbb{N}^{+}$,
	\begin{align}\label{4}
		J_{\omega}(u_n)-\frac{1}{n}<\Upsilon_{\omega}(a_n)\;\;\text{and}\;\; J_{\omega}(u_n)<0.
	\end{align}
We claim $\Upsilon_{\omega}(a_n)\rightarrow \Upsilon_{\omega}(a)$ as $n\rightarrow +\infty$. Once this claim is true, then it follows from the definition of continuity that $\Upsilon_{\omega}(a)$ is continuous with regard to $a\in (0,\bar{a})$. Next, we use three steps to prove the above claim.

\textbf{Step 1.} Let $w_n:=\frac{a}{a_n}u_n$, then $\{w_n \}\subset W^{a}_{\bar{r}}$.

Indeed, it is clear that $w_n\in S(a)$. Next, we need to verify $\|\nabla w_n\|_{2}<\bar{r}$. When $a\le a_n$, we can easily obtain  $\|\nabla w_n\|_{2}<\bar{r}$. On the other hand, when $a_n<a<\bar{a}$. By Lemma \ref{xingzhi g} and \eqref{bar r}, one has $g(a,\bar{r})>0$ for all $a \in (0,\bar{a})$. Then it follows from Lemma \ref{2.4} that
\begin{align}\label{step 1}
	g(a_n,r)\ge 0\quad \text{if}\; r\in \left[\frac{a_n}{a}\bar{r},\bar{r} \right]. 
\end{align}
Furthermore, combining \eqref{4} and Lemma \ref{J in S}, one has $0>J_{\omega}(u_n)\ge \|\nabla u_n\|^{2}_{2}g(a,\|\nabla u_n\|_{2})$. Together this with \eqref{step 1} gives $\|\nabla u_n\|_{2}<\frac{a_n}{a}\bar{r}$. Therefore, $\|\nabla w_n\|_{2}=\frac{a}{a_n}\|\nabla u_n\|_{2}<\bar{r}$.

\textbf{Step 2.} There holds $\Upsilon_{\omega}(a)\le \Upsilon_{\omega}(a_n)+o_n(1)$.

By Step 1, $\lim\limits_{n\rightarrow \infty}a_n=a$, $\{u_n\}\subset W^{a_n}_{\bar{r}}$, Lemmas \ref{G-N} and \ref{jixiao<0}, we have
\begin{align*}
	\begin{split}
		\Upsilon_{\omega}(a)\le J_{\omega}(w_n)=&J_{\omega}(u_n)+(J_{\omega}(w_n)-J_{\omega}(u_n))\\
		=&J_{\omega}(u_n)+\frac{1}{2}\left[\left( \frac{a}{a_n}\right)^{2}-1  \right]\int_{{\mathbb{R}}^3}|\nabla u_n|^{2}dx+\left[\left( \frac{a}{a_n}\right)^{2}-1  \right]\int_{{\mathbb{R}}^3}\omega| u_n|^{2}dx\\
		&+\frac{1}{2}\left[\left( \frac{a}{a_n}\right)^{4}-1  \right]N(u)\\
		&-\frac{2|\lambda_{3}|}{p}\left[\left(\frac{a}{a_n} \right)^{p}-1  \right]\int_{{\mathbb{R}}^3}|u_n|^{p}dx
	\longrightarrow	 J_{\omega}(u_n)+o_n(1), \;\;\text{as}\; n\rightarrow \infty.
	\end{split}
\end{align*}
 Together this with \eqref{4} yields
\begin{align*}
	\Upsilon_{\omega}(a)\le J_{\omega}(w_n)=J_{\omega}(u_n)+o_n(1)\le \Upsilon_{\omega}(a_n)+o_n(1).
\end{align*}

\textbf{Step 3.}  There holds
$\Upsilon_{\omega}(a)\ge \Upsilon_{\omega}(a_n)+o_n(1)$.

Indeed, let $\zeta_n:=\frac{a_n}{a}z_{n}$ and $\{u_n\}\subset W^{a_n}_{\bar{r}}$ satisfying for all $n\in \mathbb{N}^{+}$,
\begin{align*}
	J_{\omega}(z_n)-\frac{1}{n}<\Upsilon_{\omega}(a)\;\;\text{and}\;\; J_{\omega}(z_n)<0.
\end{align*}
Then as the same proof in Steps 1-2, we can obtain
\begin{align*}
	\Upsilon_{\omega}(a_n)\le J_{\omega}(\zeta_n)=J_{\omega}(z_n)+(J_{\omega}(\zeta_n)-J_{\omega}(z_n))\le \Upsilon_{\omega}(a)+o_n(1).
\end{align*}
 We complete the proof.
\end{proof}

\begin{lemma}\label{budengshi}
Let $0<a_1<a_2<\bar{a}$. There holds 
\begin{align*}
	\Upsilon_{\omega}(a_2)\le \Upsilon_{\omega}(a_1)+\Upsilon_{\omega}(\sqrt{a^{2}_{2}-a_{1}^{2}}).
\end{align*}
Moreover, if $\Upsilon_{\omega}(a_1)$ or $\Upsilon_{\omega}(\sqrt{a^{2}_{2}-a_{1}^{2}})$ is attained, then the above inequality is strict.
\end{lemma}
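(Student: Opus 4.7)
The plan is a translation/concatenation argument, exploiting the translation invariance of $J_\omega$ since the potential $\omega$ here is constant. Set $b:=\sqrt{a_2^2-a_1^2}\in(0,\bar a)$ and, by Lemma~\ref{jixiao<0}, pick minimizing sequences $\{u_n\}\subset W^{a_1}_{\bar r}$, $\{v_n\}\subset W^{b}_{\bar r}$ with $J_\omega(u_n)\to\Upsilon_\omega(a_1)<0$ and $J_\omega(v_n)\to\Upsilon_\omega(b)<0$. Pick $y_n\in\mathbb R^3$ with $|y_n|\to\infty$ and set $w_n:=u_n+v_n(\cdot-y_n)$. Using the decay of $K(x)\sim|x|^{-3}$ together with Lemma~\ref{B-L}, all cross terms in $\|w_n\|_2^2$, $\|\nabla w_n\|_2^2$, $\|w_n\|_p^p$ and $N(w_n)$ vanish as $n\to\infty$, giving
\[
\|w_n\|_2^2\to a_2^2 \qquad\text{and}\qquad J_\omega(w_n)=J_\omega(u_n)+J_\omega(v_n)+o_n(1).
\]
Renormalizing $\tilde w_n:=(a_2/\|w_n\|_2)w_n\in S(a_2)$, one has $J_\omega(\tilde w_n)-J_\omega(w_n)\to 0$.

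The key technical step is to verify $\tilde w_n\in W^{a_2}_{\bar r}$ for $n$ large. Because $J_\omega(u_n)<0$, Lemma~\ref{J in S} forces $g(a_1,\|\nabla u_n\|_2)<0$; combined with $\|\nabla u_n\|_2<\bar r$ and the shape of $g(a_1,\cdot)$ from Lemmas~\ref{xingzhi g}--\ref{2.4} (unique maximum, two roots $r^-_{a_1}<r^+_{a_1}$, and $\bar r\in(r^-_{a_1},r^+_{a_1})$ because $g(a_1,\bar r)>g(\bar a,\bar r)=0$), this yields $\|\nabla u_n\|_2<r^-_{a_1}$, and symmetrically $\|\nabla v_n\|_2<r^-_b$. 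Applying Lemma~\ref{2.4} to $g(a_2,\bar r)>0$ gives $r^-_{a_1}<(a_1/a_2)\bar r$ and $r^-_b<(b/a_2)\bar r$ strictly (an inspection of the proof of Lemma~\ref{2.4} produces strict inequality whenever $a_1,b<a_2$). Thus $(r^-_{a_1})^2+(r^-_b)^2<(a_1^2+b^2)\bar r^2/a_2^2=\bar r^2$ is a fixed strict bound, so $\|\nabla w_n\|_2^2\le(r^-_{a_1})^2+(r^-_b)^2+o_n(1)<\bar r^2$ for $n$ large, and passing to the limit gives $\Upsilon_\omega(a_2)\le\Upsilon_\omega(a_1)+\Upsilon_\omega(b)$.

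For the strict inequality, assume $\Upsilon_\omega(a_1)$ is attained by some $u_*\in W^{a_1}_{\bar r}$ (the case where $\Upsilon_\omega(b)$ is attained is analogous). Since $u_*$ is an interior minimizer, it is a critical point of $J_\omega|_{S(a_1)}$ and satisfies a Pohozaev identity obtained by differentiating $\tau\mapsto J_\omega(\tau^{3/2}u_*(\tau\cdot))$ at $\tau=1$. Redo the construction with $u_n\equiv u_*$, and replace the pair by $(\theta u_*,\eta v_n(\cdot-y_n))$ with $\theta^2 a_1^2+\eta^2 b^2=a_2^2$. Expanding the energy along the curve $\theta=1+s$ with $s\ge 0$ small, the Pohozaev identity of $u_*$ together with $p<10/3$, $\lambda_3<0$ and $N(v_n)<0$ gives a strictly negative first-order coefficient at $s=0$; this produces a competitor strictly below $J_\omega(u_*)+J_\omega(v_n)$, yielding strict subadditivity in the limit $n\to\infty$. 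The main obstacle throughout is the rigid cutoff $W^{a_2}_{\bar r}$: every rescaling must be checked for admissibility, and the scaling identity of Lemma~\ref{2.4} is exactly what supplies that check.
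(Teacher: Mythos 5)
Your route is genuinely different from the paper's: you prove subadditivity by gluing two far-apart bumps $u_n+v_n(\cdot-y_n)$, whereas the paper never decomposes anything. It only uses the amplitude rescaling $u\mapsto\theta u$ on a single near-minimizer to get $\Upsilon_{\omega}(\theta a)\le\theta^{2}\Upsilon_{\omega}(a)$ for $\theta>1$ (the three correction terms $\tfrac{\theta^{4}-\theta^{2}}{2}N(u_n)$ and $\tfrac{2|\lambda_3|(\theta^{2}-\theta^{p})}{p}\|u_n\|_p^p$ are both negative there because $N<0$, $\lambda_3<0$, $p>2$), and then writes
\begin{align*}
\Upsilon_{\omega}(a_2)=\frac{a_1^2}{a_2^2}\,\Upsilon_{\omega}\Bigl(\tfrac{a_2}{a_1}a_1\Bigr)+\frac{a_2^2-a_1^2}{a_2^2}\,\Upsilon_{\omega}\Bigl(\tfrac{a_2}{\sqrt{a_2^2-a_1^2}}\sqrt{a_2^2-a_1^2}\Bigr)\le\Upsilon_{\omega}(a_1)+\Upsilon_{\omega}\bigl(\sqrt{a_2^2-a_1^2}\bigr),
\end{align*}
with strictness for free when a minimizer exists, since the pointwise inequality $J_{\omega}(\theta u_*)<\theta^{2}J_{\omega}(u_*)$ is strict. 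Your admissibility analysis (showing $\|\nabla u_n\|_2<r^-_{a_1}$, $\|\nabla v_n\|_2<r^-_{b}$ and $(r^-_{a_1})^2+(r^-_b)^2<\bar r^2$) is sound and is essentially the same mechanism as the paper's Lemma \ref{2.4}/Step 1 of Lemma \ref{lem3.4}; the vanishing of the nonlocal cross terms for the dipolar kernel needs a word more than a citation of Lemma \ref{B-L} (which is a Brezis--Lieb splitting for weak limits, not for translates), but this is repairable by approximating by compactly supported near-minimizers and a diagonal choice of $y_n$.

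The genuine gap is in your strictness argument. Varying $(\theta,\eta)$ along $\theta^{2}a_1^2+\eta^{2}b^2=a_2^2$ gives a first-order coefficient at $s=0$ equal to $\langle J_{\omega}'(u_*),u_*\rangle-\tfrac{a_1^2}{b^2}\langle J_{\omega}'(v_n),v_n\rangle$. The first term is $-\mu_*a_1^2<0$ (this is the Lagrange-multiplier identity, not the Pohozaev identity you invoke, which comes from the mass-preserving scaling $\tau^{3/2}u_*(\tau\cdot)$ and is a different relation), but $v_n$ is only a minimizing sequence, not a critical point, so $\langle J_{\omega}'(v_n),v_n\rangle=\|\nabla v_n\|_2^2+2\omega\|v_n\|_2^2+2N(v_n)+2\lambda_3\|v_n\|_p^p$ has no determined sign, and the combination can be nonnegative. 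So the claimed ``strictly negative first-order coefficient'' is not justified, and $p<10/3$, $\lambda_3<0$, $N<0$ do not rescue it. The clean fix is to abandon the perturbative expansion and use the paper's observation directly: if $u_*$ attains $\Upsilon_{\omega}(a_1)$, then for $\theta=a_2/a_1>1$ one has $\theta u_*\in W^{\theta a_1}_{\bar r}$ and $J_{\omega}(\theta u_*)<\theta^{2}J_{\omega}(u_*)$ strictly, which already yields $\Upsilon_{\omega}(a_2)<\Upsilon_{\omega}(a_1)+\Upsilon_{\omega}(\sqrt{a_2^2-a_1^2})$ without any concatenation.
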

\begin{proof}
Let $0<a_1<a_2<\bar{a}$ and $v_n=\theta u_n$, where $0<\theta<\frac{a_2}{a_1}$. Then as the same proof in \eqref{4} and Lemma \ref{lem3.4}-Step 1, there exists $\{u_n\}\subset W^{a_1}_{\bar{r}}$ satisfying for all $n\in \mathbb{N}^{+}$,
\begin{align*}
	J_{\omega}(u_n)<\Upsilon_{\omega}(a_1)+\frac{1}{n}\;\;\text{and}\;\; J_{\omega}(u_n)<0
\end{align*}
and $v_n \in W^{\theta a_1}_{\bar{r}}$. Combining these with Lemma \ref{N<} and $2<p<\frac{10}{3}$, we deduce that
\begin{align}\label{5}
	\begin{split}
		\Upsilon_{\omega}(\theta a_1)\le J_{\omega}(v_n)&=\frac{1}{2}\int_{{\mathbb{R}}^3}|\nabla v_n|^{2}dx+\int_{{\mathbb{R}}^3}\omega|v_n|^{2}dx+\frac{1}{2}N(v_n)-\frac{2|\lambda_3|}{p}\int_{{\mathbb{R}}^3}|v_n|^{p}dx\\
		&=\frac{\theta^{2}}{2}\int_{{\mathbb{R}}^3}|\nabla u_n|^{2}dx+\theta^{2}\int_{{\mathbb{R}}^3} \omega|u_n|^{2}dx+\frac{\theta^{4}}{2}N(u_n)-\frac{2|\lambda_3|\theta^{p}}{p}\int_{{\mathbb{R}}^3}|u_n|^{p}dx\\
		&=\theta^{2}J_{\omega}(u_n)+\frac{\theta^{4}-\theta^{2}}{2}N(u_n)+\frac{2|\lambda_3|(\theta^{2}-\theta^{p})}{p}\int_{{\mathbb{R}}^3}|u_n|^{p}dx\\
		&<\theta^{2}J_{\omega}(u_n)\\
		&\le \theta^{2}\Upsilon_{\omega}(a_1)+\theta^{2}\frac{1}{n},
	\end{split}
\end{align}
which implies that
\begin{align}\label{b}
	\Upsilon_{\omega}(\theta a_1)\le \theta^{2} \Upsilon_{\omega}(a_1),
\end{align}
as $n\rightarrow \infty$. Therefore, we have 
\begin{align*}
	\begin{split}
		\Upsilon_{\omega}(a_2)=&\frac{a_1^2}{a_2^2}\Upsilon_{\omega}\left(\frac{a_2}{a_1} a_1 \right)+\frac{a_2^2-a^2_1}{a_2^2}\Upsilon_{\omega}\left(\frac{a_2}{\sqrt{a_2^2-a^2_1}}\sqrt{a_2^2-a^2_1} \right)\\
		\le& \Upsilon_{\omega}(a_1)+\Upsilon_{\omega}\left(\sqrt{a_2^2-a^2_1} \right).
	\end{split}
\end{align*}
If $\Upsilon_{\omega}(a_1)$ is attained, it follows from \eqref{5} that the inequality is strict. 
The proof is finished.
\end{proof}

\begin{lemma}\label{compactness}
	Let $\{u_n\}\subset W^{a}_{\bar{r}}$ be a minimizing sequence with respect to $J_{\omega}$. Then, for some subsequence, either	\\
	(i) $\{u_n\}$ is strongly convergent, or\\
	(ii) there exists $\{y_n\}\subset \mathbb{R}^{3}$ with $|y_n|\rightarrow+\infty$ such that the sequence $v_n(x)=u_n(x+y_n)$ is strongly convergent to a function $v\in W^{a}_{\bar{r}}$ with $J_{\omega}(v)=\Upsilon_{\omega}(a)$.	
\end{lemma}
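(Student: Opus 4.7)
The plan is to run a concentration--compactness argument on the bounded minimizing sequence: rule out vanishing using $\Upsilon_\omega(a) < 0$ from Lemma \ref{jixiao<0}, extract a nonzero weak limit (possibly after a spatial translation, which selects case (i) or case (ii)), and then show mass and gradient pass to the limit using the Brezis--Lieb splitting together with Lemma \ref{B-L} and the strict subadditivity of Lemma \ref{budengshi}.

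Since $\{u_n\} \subset W^a_{\bar{r}}$ is bounded in $H^1(\mathbb{R}^3)$, up to a subsequence $u_n \rightharpoonup u$ weakly. I first eliminate vanishing: if $\sup_{y\in\mathbb{R}^3} \int_{B_1(y)} |u_n|^2\,dx \to 0$, then by Lions' lemma $u_n \to 0$ in every $L^q(\mathbb{R}^3)$ with $q \in (2, 6)$, so $\|u_n\|_p^p \to 0$ and, by Lemma \ref{N<}, $|N(u_n)| \leq \Lambda \|u_n\|_4^4 \to 0$; hence $\liminf_n J_\omega(u_n) \geq 0 > \Upsilon_\omega(a)$, a contradiction. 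Non-vanishing then provides $\{y_n\} \subset \mathbb{R}^3$ and $\delta > 0$ with $\int_{B_1(y_n)} |u_n|^2\,dx \geq \delta$, and the shifted sequence $v_n(x) := u_n(x + y_n)$ satisfies $v_n \rightharpoonup v \neq 0$ in $H^1$. If $\{y_n\}$ stays bounded one may take $y_n \equiv 0$, so the original $u_n$ has a nontrivial weak limit (case (i)); otherwise $|y_n| \to \infty$ and we aim at case (ii).

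Set $w_n := v_n - v$ and $b := \|v\|_2 \in (0, a]$. Brezis--Lieb together with Lemma \ref{B-L} and the linearity of the quadratic terms give $J_\omega(v_n) = J_\omega(v) + J_\omega(w_n) + o_n(1)$, with $\|v_n\|_2^2$ and $\|\nabla v_n\|_2^2$ splitting analogously. Suppose for contradiction $b < a$, so $c_n := \|w_n\|_2 \to c := \sqrt{a^2 - b^2} > 0$. The gradient splitting combined with $\|\nabla v_n\|_2 < \bar{r}$ puts $w_n \in W^{c_n}_{\bar{r}}$ for large $n$, hence $J_\omega(w_n) \geq \Upsilon_\omega(c_n)$; in addition $J_\omega(v) \geq \Upsilon_\omega(b)$, where in the boundary case $\|\nabla v\|_2 = \bar{r}$ one uses Lemma \ref{J in S} together with the monotonicity $g(b, \bar{r}) > g(\bar{a}, \bar{r}) = 0$ for $b < \bar{a}$ to conclude $J_\omega(v) > 0 > \Upsilon_\omega(b)$. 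Passing to the limit via the continuity of $\Upsilon_\omega$ from Lemma \ref{lem3.4} yields $\Upsilon_\omega(a) \geq \Upsilon_\omega(b) + \Upsilon_\omega(c)$, which contradicts the strict clause of Lemma \ref{budengshi} because equality would force $v$ to attain $\Upsilon_\omega(b)$. Therefore $b = a$, so $v_n \to v$ strongly in $L^q(\mathbb{R}^3)$ for every $q \in [2, 6)$, $N(w_n) \to 0$, and the decomposition collapses to $\tfrac{1}{2}\|\nabla w_n\|_2^2 = J_\omega(v_n) - J_\omega(v) + o_n(1)$. Ruling out $\|\nabla v\|_2 = \bar{r}$ exactly as above gives $v \in W^a_{\bar{r}}$, so $J_\omega(v) \geq \Upsilon_\omega(a) = \lim_n J_\omega(v_n)$; this forces $\|\nabla w_n\|_2 \to 0$ and $J_\omega(v) = \Upsilon_\omega(a)$, yielding strong convergence in $H^1$.

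The hardest step will be controlling the boundary case $\|\nabla v\|_2 = \bar{r}$: because $W^a_{\bar{r}}$ is open, the weak limit can a priori sit on $\partial W^a_{\bar{r}}$, and it is the positivity $g(b, \bar{r}) > 0$ for $b < \bar{a}$ (from the monotonicity $\partial_a g < 0$ exploited in Lemma \ref{2.4}), together with Lemma \ref{J in S}, that keeps $J_\omega(v)$ strictly larger than $\Upsilon_\omega(b)$ in that case and closes the subadditivity argument via Lemma \ref{budengshi}. Everything else is the standard Brezis--Lieb mechanism coupled with the continuity from Lemma \ref{lem3.4}.
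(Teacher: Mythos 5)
Your proposal is correct and follows essentially the same route as the paper: rule out vanishing via $\Upsilon_{\omega}(a)<0$, translate to obtain a nonzero weak limit (the bounded/unbounded dichotomy of $\{y_n\}$ giving cases (i) and (ii)), split the functional with Brezis--Lieb and Lemma \ref{B-L}, and exclude dichotomy with the strict subadditivity of Lemma \ref{budengshi}. You are in fact somewhat more careful than the paper on two points it glosses over --- the possible boundary case $\|\nabla v\|_{2}=\bar{r}$ (handled via $g(b,\bar{r})>0$ and Lemma \ref{J in S}) and the attainment needed to invoke the strict clause of Lemma \ref{budengshi} --- both of which you resolve correctly.
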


\begin{proof}
	Let $\{u_n\}\subset W^{a}_{\bar{r}}$ be a minimizing sequence such that
	\begin{align*}
		J_{\omega}(u_n)=\Upsilon_{\omega}(a)+o_{n}(1).
	\end{align*}
It follows that $\{u_n\}$ is bounded in $H^{1}(\mathbb{R}^{3})$. Therefore, there exists $u\in H^{1}(\mathbb{R}^{3})$ such that $u_n \rightharpoonup u$ in $H^{1}(\mathbb{R}^{3})$.

Case 1. $u\not \equiv 0$ in $\mathbb{R}^{3}$. Let $v_n=u_n-u$, 
it follows from Brezis-Lieb Lemma \cite{Willem 1996} and Lemma \ref{B-L} that
\begin{align}\label{11}
	\|u_n\|^{2}_{2}=\|v_n\|^{2}_{2}+\|u\|^{2}_{2}+o_{n}(1), \quad
	\|\nabla u_n\|^{2}_{2}=\|\nabla v_n\|^{2}_{2}+\|\nabla u\|^{2}_{2}+o_{n}(1),
\end{align}
\begin{align}\label{22}
	\|u_n\|^{p}_{p}=\|v_n\|^{p}_{p}+\|u\|^{p}_{p}+o_{n}(1),\quad
	N(u_n)=N(v_n)+N(u)+o_{n}(1).
\end{align}
Then we can see $\|\nabla u\|_{2}<\bar{r}$ and $\|u\|_{2}\le a$.
Next, we claim $\|u\|_{2}=a$. Suppose by contradiction that $\|u\|_{2}=b<a$, it follows that 
 $\|v_n\|_{2}=\sqrt{a^{2}-b^{2}}$. Then by \eqref{11}, \eqref{22} and Lemma \ref{budengshi}, one has
 \begin{align*}
 	\begin{split}
 	\Upsilon_{\omega}(a)+o_{n}(1)=J_{\omega}(u_n)&=J_{\omega}(v_n)+J_{\omega}(u)+o_{n}(1)\\
 	&\ge \Upsilon_{\omega}(\sqrt{a^{2}-b^{2}})+\Upsilon_{\omega}(b)+o_{n}(1)\\
 	&>\Upsilon_{\omega}(a)+o_{n}(1),
 	\end{split}
 \end{align*}
 which is a contradiction. Thus, $\|u\|_{2}=a$. It follows that $v_n \rightarrow 0$ in  $L^{2}(\mathbb{R}^{3})$ and $u\in W^{a}_{\bar{r}}$. Together this with Fatou Lemma gives
\begin{align*}
	\begin{split}
		\Upsilon_{\omega}(a)&=\lim\limits_{n\rightarrow +\infty}J_{\omega}(u_n)\\
		&=\lim\limits_{n\rightarrow +\infty}\left(\frac{1}{2}\int_{{\mathbb{R}}^3}|\nabla u_n|^{2}dx+\int_{{\mathbb{R}}^3}\omega|u_n|^{2}dx+\frac{1}{2}N(u_n)-\frac{2|\lambda_3|}{p}\int_{{\mathbb{R}}^3}|u_n|^{p}dx \right)\\
		&\ge J_{\omega}(u)\ge \Upsilon_{\omega}(a), 
	\end{split}
\end{align*}
which implies that $J_{\omega}(u)=\Upsilon_{\omega}(a)$ and $\lim\limits_{n\rightarrow +\infty}J_{\omega}(u_n)=J_{\omega}(u)$. Therefore, we have $u_n \rightarrow u$ in $H^{1}(\mathbb{R}^{3})$.

Case 2. $u \equiv 0$ in $\mathbb{R}^{3}$.
We claim there are $\eta,R>0$ and $\{y_n\} \in \mathbb{R}^{3}$ such that for all $n$,
\begin{align}\label{33}
	\int_{B_{R}(y_n)}|u_n|^{2}dx\ge \eta>0,
\end{align}
where $B_{R}(y_n):=\{x \in \mathbb{R}^{3}: |x-y_n|<R\}$.
Once this claim is true, then one has $|y_n|\rightarrow +\infty$. Otherwise, there exists $R_n>0$
such that
\begin{align*}
	\int_{B_{R_n}(0)}|u_n|^{2}dx>\int_{B_{R}(y_n)}|u_n|^{2}dx\ge \eta>0.
\end{align*}
However, $u_n\rightharpoonup 0$ in $H^{1}(\mathbb{R}^{3})$, and consequently, $u_n \rightarrow 0$ in $L^{2}(B_{R_n}(0))$, which is a contradiction. In addition, we let $v_n:=u_n(x+y_n)$, it follows that $\|v_n\|_{2}=\|u_n\|_{2}$, $\|\nabla v_n\|_{2}=\|\nabla u_n\|_{2}$ and $J_{\omega}(v_n)=J_{\omega}(u_n)$. Therefore, there exists $v\in H^{1}(\mathbb{R}^{3})$ such that $v_n \rightharpoonup v$ in $ H^{1}(\mathbb{R}^{3})$. Combining this with \eqref{33}, we have
\begin{align*}
	\eta \le \|u_n\|_{L^{2}(B_{R}(y_n))}=\|v_n\|_{L^{2}(B_{R}(y_n))}\le\|v_n-v\|_{L^{2}(B_{R}(y_n))}+\|v\|_{L^{2}(B_{R}(y_n))},
\end{align*}
which implies $v \not \equiv 0$ in $\mathbb{R}^{3}$.
Then as the same proof in Case 1, we have $v_n \rightarrow v$ in $ H^{1}(\mathbb{R}^{3})$.

Next, we prove the above claim. Assume by contradiction that $	\int_{B_{R}(y_n)}|u_n|^{2}dx=0$. Combining this with \cite[Lemma 1.21]{Willem 1996}, we have $u_n \rightarrow 0$ in $L^{p}(\mathbb{R}^{3})$ for $2<p<6$.
Together this with Lemma \ref{N<} gives
\begin{align*}
	\begin{split}
		J_{\omega}(u_n)&=\frac{1}{2}\int_{{\mathbb{R}}^3}|\nabla u_n|^{2}dx+\int_{{\mathbb{R}}^3}\omega|u_n|^{2}dx+\frac{1}{2}N(u_n)-\frac{2|\lambda_3|}{p}\int_{{\mathbb{R}}^3}|u_n|^{p}dx\\
		&\ge \frac{1}{2}\|\nabla u_n\|_{2}^{2}-\frac{\Lambda}{2}\|u_n\|_{4}^{4}+o_n(1)\\
		&=\frac{1}{2}\|\nabla u_n\|_{2}^{2}+o_n(1),
	\end{split}
\end{align*}
which contradicting to $\Upsilon_{\omega}(a)<0$.
We complete the proof.
\end{proof}

\begin{proof}[Proof of Theorem \ref{th3.1}]
By Lemmas \ref{jixiao<0} and \ref{compactness}, there exists $u_{a} \in W^{a}_{\bar{r}} \subset S(a)$ such that
\begin{align*}
	J_{\omega}(u_a)=\inf\limits_{u\in  W^{a}_{\bar{r}}}J_{\omega}(u)=\Upsilon_{\omega}(a)<0\; \text{and}\; J_{\omega}|^{'}_{S(a)}(u)=0.
\end{align*}
Note that $\|\nabla |u_a|\|_{2} \le \|\nabla u_a\|_{2}$. Then, $u_a$ is a nonegative solution. It follows from the strong maximum principle that $u_a>0$ in $\mathbb{R}^{3}$. Furthermore, by Lemmas \ref{N<} and \ref{jixiao<0}, there exists $\mu_{a}\in \mathbb{R}$ such that
\begin{align*}
	\begin{split}
		-\mu_{a} a^{2}&=\frac{1}{2}\int_{{\mathbb{R}}^3}|\nabla u_a|^{2}dx+\int_{{\mathbb{R}}^3}\omega|u_a|^{2}dx+N(u_a)-|\lambda_3|\int_{{\mathbb{R}}^3}|u_a|^{p}dx\\
		&=\Upsilon_{\omega}(a)+\frac{1}{2}N(u_a)-\frac{(p-2)|\lambda_3|}{p}\int_{{\mathbb{R}}^3}|u_a|^{p}dx\\
		&<0.
	\end{split}
\end{align*}
Thus, $\mu_{a}>0$. The proof is finished.
\end{proof}
\begin{corollary}\label{cor 3.8}
	Let $0 \le \omega_1<\omega_2\le V_{\ast}$, then $\Upsilon_{\omega_1}(a)<\Upsilon_{\omega_2}(a)<0$.
\end{corollary}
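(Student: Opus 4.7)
The plan is to exploit the trivial fact that on the constraint sphere $S(a)$ the functionals $J_{\omega_1}$ and $J_{\omega_2}$ differ only by a constant. Indeed, the potential term is the only one depending on $\omega$, and since $\int_{\mathbb{R}^3}|u|^{2}dx=a^{2}$ for any $u\in S(a)$, one has the pointwise identity
\begin{equation*}
J_{\omega_1}(u)-J_{\omega_2}(u)=(\omega_1-\omega_2)\int_{\mathbb{R}^3}|u|^{2}dx=(\omega_1-\omega_2)a^{2},\qquad u\in S(a).
\end{equation*}
Moreover, the admissible set $W^{a}_{\bar r}$ depends only on $a$ and $\bar r$, not on $\omega$, so it is common to both minimization problems. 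This observation is the entire engine of the proof.

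To obtain $\Upsilon_{\omega_1}(a)<\Upsilon_{\omega_2}(a)$, I would apply Theorem \ref{th3.1} at the level $\omega_2$ to produce a minimizer $u_{2}\in W^{a}_{\bar r}$ with $J_{\omega_2}(u_{2})=\Upsilon_{\omega_2}(a)$. Using $u_{2}$ as a test function for $\Upsilon_{\omega_1}(a)$ and invoking the identity above gives
\begin{equation*}
\Upsilon_{\omega_1}(a)\le J_{\omega_1}(u_{2})=J_{\omega_2}(u_{2})+(\omega_1-\omega_2)a^{2}=\Upsilon_{\omega_2}(a)+(\omega_1-\omega_2)a^{2}<\Upsilon_{\omega_2}(a),
\end{equation*}
which is the first half of the claim. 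The strict inequality is automatic from $\omega_1<\omega_2$ and $a>0$.

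For the remaining inequality $\Upsilon_{\omega_2}(a)<0$, I would just invoke Lemma \ref{jixiao<0} at $\omega=\omega_2$. There is one very minor point worth noting: Lemma \ref{jixiao<0} is stated under the strict assumption $\omega<V_{\ast}$, whereas here we allow $\omega_2\le V_{\ast}$. Looking back at its proof, the crucial step is negativity of $\kappa+\omega a^{2}$; since $a\in(0,\bar a)$ is \emph{strict}, one has $\omega_2 a^{2}\le V_{\ast}a^{2}<V_{\ast}\bar a^{2}=-\kappa$, so negativity persists up to and including $\omega_2=V_{\ast}$. Alternatively, this edge case can be bypassed entirely by replacing the minimizer $u_{2}$ with a minimizing sequence in $W^{a}_{\bar r}$ and passing to the limit in the comparison above. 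There is no genuine obstacle in this argument — it is a direct consequence of the linearity of $J_{\omega}$ in $\omega$ on $S(a)$ together with results already established in this section.
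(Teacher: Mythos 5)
Your proof is correct and follows essentially the same route as the paper: take the minimizer $u_{2}\in W^{a}_{\bar r}$ for $J_{\omega_2}$ furnished by Theorem \ref{th3.1}, use it as a competitor for $\Upsilon_{\omega_1}(a)$, and exploit that $J_{\omega_1}-J_{\omega_2}=(\omega_1-\omega_2)a^{2}<0$ on $S(a)$, with negativity of $\Upsilon_{\omega_2}(a)$ coming from Lemma \ref{jixiao<0}. Your remark on the boundary case $\omega_2=V_{\ast}$ (which the paper silently glosses over, since both Lemma \ref{jixiao<0} and Theorem \ref{th3.1} are stated for $\omega<V_{\ast}$) is a worthwhile extra touch, and your fix via $a<\bar a$ strict, or via a minimizing sequence, is sound.
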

\begin{proof}
	Let $u_{\mu_2} \in W^{a}_{\bar{r}}$ such that $J_{\omega_2}(u_{\mu_2})=\Upsilon_{\omega_2}(a)$. Combining this with Lemma \ref{jixiao<0}, we have
	\begin{align*}
		\Upsilon_{\omega_1}(a)\le J_{\omega_1}(u_{\mu_2})<J_{w_2}(u_{\mu_2})=\Upsilon_{\omega_2}(a)<0,
	\end{align*}
	which completes the proof.
\end{proof}


\section{Proof of Theorem \ref{th1.1}}\label{sec:Multiplicity result}

\setcounter{section}{4}
\setcounter{equation}{0}

In this section, we prove Theorem \ref{th1.1}. We start with the following definitions. Let \begin{align*}
	J_{\infty}(u):=\frac{1}{2}\int_{{\mathbb{R}}^3}|\nabla u|^{2}dx+\int_{{\mathbb{R}}^3}V_{\infty}|u|^{2}dx+\frac{1}{2}N(u)-\frac{2|\lambda_3|}{p}\int_{{\mathbb{R}}^3}|u|^{p}dx 
\end{align*}
and
\begin{align*}
	J_{0}(u):=\frac{1}{2}\int_{{\mathbb{R}}^3}|\nabla u|^{2}dx+\frac{1}{2}N(u)-\frac{2|\lambda_3|}{p}\int_{{\mathbb{R}}^3}|u|^{p}dx, \quad \text{for all}\; u\in H^{1}(\mathbb{R}^{3}).
\end{align*}
In what following, we assume that $\|V\|_{\infty}<V_{\ast}$, where $V_{\ast}$ is given by Lemma \ref{jixiao<0}. Combining this with Theorem \ref{th3.1},
we can define the following minimization problems
\begin{align*}
	\Upsilon_{0}(a):=\inf\limits_{u\in W^{a}_{\bar{r}}}J_{0}(u),\;\Upsilon_{\infty}(a):=\inf\limits_{u\in W^{a}_{\bar{r}}}J_{\infty}(u),\;\Upsilon_{\varepsilon}(a):=\inf\limits_{u\in W^{a}_{\bar{r}}}J_{\varepsilon}(u) \; \text{for any}\; a\in(0,\bar{a}).
\end{align*}
\begin{lemma}\label{lem 4.1}
	$\limsup\limits_{\varepsilon \rightarrow 0^{+}}\Upsilon_{\varepsilon}(a) \le \Upsilon_{0}(a)$ and there is $\varepsilon_0>0$ such that $\Upsilon_{\varepsilon}(a)<\Upsilon_{\infty}(a)$  for any $\varepsilon \in (0,\varepsilon_{0})$.
\end{lemma}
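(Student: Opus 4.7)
The plan is to use the minimizer of the translation-invariant functional $J_0$ as a test function for $J_\varepsilon$, and then to chain that estimate with Corollary \ref{cor 3.8} to extract the strict inequality against $\Upsilon_\infty(a)$.

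First I would invoke Theorem \ref{th3.1} with $\omega = 0$ (a permissible choice since $0 < V_\ast$) to produce $u_0 \in W^{a}_{\bar{r}}$ with $J_0(u_0) = \Upsilon_0(a)$. Because $W^{a}_{\bar{r}}$ is independent of $\varepsilon$, this $u_0$ is admissible for $\Upsilon_\varepsilon(a)$, and the identity
\begin{align*}
J_\varepsilon(u_0) \;=\; J_0(u_0) + \int_{\mathbb{R}^3} V(\varepsilon x)\,|u_0(x)|^{2}\,dx
\end{align*}
reduces the first assertion to proving that $\int_{\mathbb{R}^3} V(\varepsilon x)|u_0|^2\,dx \to 0$ as $\varepsilon \to 0^+$. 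For each fixed $x$, $\varepsilon x \to 0$, so (using that $0$ is a zero of $V$ in the sense of $(V_3)$ and continuity at $c_1 = 0$) $V(\varepsilon x) \to V(0) = 0$ pointwise. Since $V(\varepsilon x)|u_0|^2 \le \|V\|_\infty |u_0|^2 \in L^1(\mathbb{R}^3)$, the dominated convergence theorem yields the desired convergence, giving
\begin{align*}
\limsup_{\varepsilon \to 0^+}\Upsilon_\varepsilon(a) \;\le\; \limsup_{\varepsilon \to 0^+} J_\varepsilon(u_0) \;=\; J_0(u_0) \;=\; \Upsilon_0(a).
\end{align*}

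For the second assertion I would combine this with Corollary \ref{cor 3.8}. Since $\|V\|_\infty < V_\ast$ and, by $(V_2)$, $0 = V_0 < V_\infty \le \|V\|_\infty$, the hypotheses of Corollary \ref{cor 3.8} hold with $\omega_1 = 0$ and $\omega_2 = V_\infty$, producing the strict separation $\Upsilon_0(a) < \Upsilon_\infty(a) < 0$. Chaining this with the limsup estimate above yields $\limsup_{\varepsilon \to 0^+}\Upsilon_\varepsilon(a) \le \Upsilon_0(a) < \Upsilon_\infty(a)$, so one can choose $\varepsilon_0 > 0$ small enough that $\Upsilon_\varepsilon(a) < \Upsilon_\infty(a)$ for all $\varepsilon \in (0, \varepsilon_0)$.

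I expect the only delicate point to be the pointwise convergence $V(\varepsilon x) \to 0$, which requires continuity of $V$ at the global minimum $c_1 = 0$ (a property implicit in the framework of $(V_1)$--$(V_3)$ and explicitly true for the model example presented after Theorem \ref{th1.1}). All other steps are either direct substitutions or reuse existing lemmas of Section \ref{Existence result}, so no additional compactness or concentration arguments are needed for this lemma.
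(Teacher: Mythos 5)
Your proposal is correct and follows essentially the same route as the paper: test $\Upsilon_\varepsilon(a)$ with the minimizer $u_0$ of $J_0$ on $W^{a}_{\bar r}$, pass to the limit via dominated convergence, and then invoke Corollary \ref{cor 3.8} with $\omega_1=0$, $\omega_2=V_\infty$ to get $\Upsilon_0(a)<\Upsilon_\infty(a)$ and hence the second assertion. Your remark that the pointwise convergence $V(\varepsilon x)\to V(0)=0$ tacitly uses continuity of $V$ at the minimum point $c_1=0$ is a fair observation about a hypothesis the paper leaves implicit.
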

\begin{proof}
	By Theorem \ref{th3.1}, there is $u_0 \in W^{a}_{\bar{r}}$ such that $J_{0}(u_0)=\Upsilon_{0}(a)$. Then, we have
	\begin{align*}
		\Upsilon_{\varepsilon}(a) \le J_{\varepsilon}(u_0)=\frac{1}{2}\int_{{\mathbb{R}}^3}|\nabla u_0|^{2}dx+\int_{{\mathbb{R}}^3}V(\varepsilon x)|u_0|^{2}dx+\frac{1}{2}N(u_0)-\frac{2|\lambda_3|}{p}\int_{{\mathbb{R}}^3}|u_0|^{p}dx.
	\end{align*}
Letting $\varepsilon \rightarrow 0^{+}$, it follows from the Lebesgue dominated convergence theorem that
\begin{align*}
	\limsup\limits_{\varepsilon \rightarrow 0^{+}}\Upsilon_{\varepsilon}(a) \le  \limsup\limits_{\varepsilon \rightarrow 0^{+}} J_{\varepsilon}(u_0)=J_{0}(u_0)=\Upsilon_{0}(a).
\end{align*}

On the other hand, combining Corollary \ref{cor 3.8} and $V_{0}=0<V_{\infty}$, we have $\Upsilon_{0}(a)<\Upsilon_{\infty}(a)<0$. Therefore, there is $\varepsilon_0>0$ such that $\Upsilon_{\varepsilon}(a)<\Upsilon_{\infty}(a)$  for any $\varepsilon \in (0,\varepsilon_{0})$. We complete the proof.
\end{proof}

\begin{lemma}\label{lem 4.2}
Suppose that $(V_1)$-$(V_3)$ hold. Let $\{u_n\}\subset W^{a}_{\bar{r}}$ satisfy $J_{\varepsilon}(u_n)\rightarrow c <\Upsilon_{\infty}(a)$ as $n\rightarrow \infty$.  If $u_n \rightharpoonup u_{\varepsilon}$ in $H^{1}(\mathbb{R}^{3})$, then $u_{\varepsilon} \neq 0$.
\end{lemma}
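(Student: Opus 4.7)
The plan is to argue by contradiction: assume $u_\varepsilon \equiv 0$ and derive that the energy level $c$ must already be $\geq \Upsilon_\infty(a)$, contradicting the hypothesis $c<\Upsilon_\infty(a)$. The idea is that under weak convergence to $0$, the mass of $u_n$ escapes to infinity, where the potential $V(\varepsilon x)$ is essentially $V_\infty$, so $J_\varepsilon$ becomes indistinguishable from $J_\infty$ along the sequence.

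First I would note that since $\{u_n\}\subset W^{a}_{\bar r}$, one has $\|u_n\|_2=a$ and $\|\nabla u_n\|_2<\bar r$, so $\{u_n\}$ is bounded in $H^1(\mathbb{R}^3)$. Assuming for contradiction that $u_n\rightharpoonup 0$, the Rellich--Kondrachov theorem gives $u_n\to 0$ strongly in $L^q_{\mathrm{loc}}(\mathbb{R}^3)$ for every $q\in[1,6)$. In particular, this applies on any ball $B_{R/\varepsilon}(0)$ since $\varepsilon$ and $R$ are fixed constants.

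The central step is the estimate
\[
J_\varepsilon(u_n)-J_\infty(u_n)=\int_{\mathbb{R}^3}\bigl(V(\varepsilon x)-V_\infty\bigr)|u_n|^2\,dx\longrightarrow 0.
\]
To prove it, fix $\delta>0$ and, by $(V_2)$, choose $R>0$ so that $|V(x)-V_\infty|<\delta$ for all $|x|>R$. Split the integral over $\{|x|\leq R/\varepsilon\}$ and its complement. On the bounded set, $|V(\varepsilon x)-V_\infty|\leq 2\|V\|_\infty$ by $(V_1)$, and $\int_{B_{R/\varepsilon}(0)}|u_n|^2\,dx\to 0$ by the local strong convergence; on the complement, the integrand is bounded by $\delta|u_n|^2$, giving a contribution at most $\delta a^2$. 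Since $\delta$ is arbitrary, the claim follows.

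Consequently $J_\infty(u_n)=J_\varepsilon(u_n)+o_n(1)\to c$. But for each $n$, $u_n\in W^{a}_{\bar r}$, so by definition of $\Upsilon_\infty(a)$ we have $\Upsilon_\infty(a)\leq J_\infty(u_n)$; passing to the limit yields $\Upsilon_\infty(a)\leq c$, contradicting $c<\Upsilon_\infty(a)$. Hence $u_\varepsilon\not\equiv 0$. The main technical point is the splitting estimate above; everything else is a routine weak-convergence/local-compactness argument, and no difficulty arises from the nonlocal term $N(u_n)$ since it appears identically in both $J_\varepsilon$ and $J_\infty$ and therefore cancels in the difference.
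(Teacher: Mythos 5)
Your proposal is correct and follows essentially the same route as the paper: contradiction via $u_\varepsilon=0$, splitting $\int(V(\varepsilon x)-V_\infty)|u_n|^2\,dx$ over $B_{R/\varepsilon}(0)$ and its complement, using local compactness on the ball and the uniform smallness of $V-V_\infty$ outside, then concluding $c\ge\Upsilon_\infty(a)$ from $J_\infty(u_n)\ge\Upsilon_\infty(a)$. The only cosmetic difference is that you establish the two-sided convergence $J_\varepsilon(u_n)-J_\infty(u_n)\to 0$, whereas the paper only needs the one-sided inequality $V(x)\ge V_\infty-\delta$ for $|x|\ge R$; both are available under $(V_1)$--$(V_2)$.
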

\begin{proof}
	Suppose by contradiction that $u_{\varepsilon} = 0$. In view of $(V_1)$-$(V_3)$, for any $\delta>0$, there exists $R>0$ such that
	\begin{align*}
		V(x)\ge V_{\infty}-\delta\;\; \text{for all}\;|x|\ge R.
	\end{align*}
Then, one has
\begin{align*}
	\begin{split}			c+o_n(1)=&J_{\varepsilon}(u_n)=J_{\infty}(u_n)+\int_{{\mathbb{R}}^3}(V(\varepsilon x)-V_{\infty})|u_n|^{2}dx\\
	\ge& J_{\infty}(u_n)+ \int_{B_{R/ \varepsilon}(0)}(V(\varepsilon x)-V_{\infty})|u_n|^{2}dx-\delta\int_{B^{c}_{R/ \varepsilon}(0)}|u_n|^{2}dx.
	\end{split}
\end{align*}
Due to $u_n \rightharpoonup 0$ in $H^{1}(\mathbb{R}^{3})$, then $\{u_n\}$ is bounded in $H^{1}(\mathbb{R}^{3})$ and $u_n \rightarrow 0$ in $L^{2}(B_{R/\varepsilon}(0))$. It follows that
\begin{align}\label{a}
	c+o_n(1)\ge J_{\infty}(u_n)-\delta C+o_n(1)\ge \Upsilon_{\infty}(a)-\delta C+o_n(1),
\end{align}
for some $C>0$. Since $\delta>0$ is arbitrary, we have $c\ge \Upsilon_{\infty}(a)$, which is a contradiction. Therefore, $u_{\varepsilon} \neq 0$. The proof is finished.
\end{proof}

\begin{lemma}\label{lem 4.3}
	Assume that $(V_1)-(V_3)$ hold. For $a \in (0,\bar{a})$, let  $\{u_n\}$ be a $(PS)_c$ sequence for $J_{\varepsilon}$ restricts to $W^{a}_{\bar{r}}$ with $c<\varUpsilon_{\infty}(a)<0$ and $u_n \rightharpoonup u_{\varepsilon}$ in $H^{1}(\mathbb{R}^{3})$. If $u_n \nrightarrow u_{\varepsilon}$ in $H^{1}(\mathbb{R}^{3})$, there exist $\varepsilon_1>0$ and $\tau>0$ independent of $\varepsilon$ such that
	\begin{align*}
		\liminf\limits_{n \rightarrow +\infty}\|u_n-u_{\varepsilon}\|^{2}_{2} \ge \tau.
	\end{align*}
\end{lemma}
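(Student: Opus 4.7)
The plan is a proof by contradiction. I will assume that, after passing to a subsequence, $\|v_n\|_2 \to 0$ with $v_n := u_n - u_\varepsilon$, and derive that $\|\nabla v_n\|_2 \to 0$ as well, so that $u_n \to u_\varepsilon$ in $H^1(\mathbb{R}^3)$, contradicting the hypothesis. The uniform $\tau$ will follow by keeping every constant in the chain of estimates independent of $\varepsilon$.

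The first step is to extract Lagrange multipliers $\mu_n \in \mathbb{R}$ from the constrained PS condition so that $J_\varepsilon'(u_n) + 2\mu_n u_n \to 0$ in $H^{-1}(\mathbb{R}^3)$. Testing this against $u_n$ and combining with $J_\varepsilon(u_n) \to c$, a short computation yields
\begin{equation*}
 2\mu_n a^2 = -2c - N(u_n) + \frac{2(p-2)|\lambda_3|}{p}\|u_n\|_p^p + o_n(1).
\end{equation*}
Since $c < \Upsilon_\infty(a) < 0$, $N(u_n) < 0$ by Lemma \ref{N<}, and the remaining term is nonnegative, this forces $\mu_n \ge \mu_0 := -\Upsilon_\infty(a)/a^2 > 0$ for large $n$, with $\mu_0$ independent of $\varepsilon$.

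Next I would test the PS relation against $v_n$. Weak convergence $v_n \rightharpoonup 0$ gives $\langle u_n,v_n\rangle_{L^2} = \|v_n\|_2^2 + o_n(1)$, and inserting $u_n = u_\varepsilon + v_n$ together with standard Brezis--Lieb splittings and Lemma \ref{B-L} removes the cross terms linear in $v_n$ paired with $u_\varepsilon$, producing
\begin{equation*}
 \|\nabla v_n\|_2^2 + 2\int_{\mathbb{R}^3}V(\varepsilon x)|v_n|^2\,dx + 2\mu_n\|v_n\|_2^2 = \mathcal{R}_n + o_n(1),
\end{equation*}
where $\mathcal{R}_n$ collects the remaining nonlinear $v_n$-contributions. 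By H\"older, Lemma \ref{N<}, and the uniform bound $\|\nabla v_n\|_2 \le 2\bar{r}$, one controls $|\mathcal{R}_n| \le C(\|v_n\|_4^2 + \|v_n\|_p^2)$ with $C = C(a,\Lambda,|\lambda_3|,p,V_\ast)$, and Gagliardo--Nirenberg (Lemma \ref{G-N}) then gives $\|v_n\|_4^2 \le C_1\|v_n\|_2^{1/2}$ and $\|v_n\|_p^2 \le C_2\|v_n\|_2^{2(1-\gamma_p)}$, both with positive exponents in $\|v_n\|_2$. Hence $\|v_n\|_2 \to 0$ forces $\mathcal{R}_n \to 0$, and combined with $\mu_n \ge \mu_0 > 0$, $\|\nabla v_n\|_2 \to 0$, i.e., $u_n \to u_\varepsilon$ in $H^1$ -- the contradiction. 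Reading the chain quantitatively yields
\begin{equation*}
 \|\nabla v_n\|_2^2 \le C_\star\bigl(\|v_n\|_2^{1/2} + \|v_n\|_2^{2(1-\gamma_p)}\bigr) + o_n(1)
\end{equation*}
with $C_\star$ independent of $\varepsilon \in (0,\varepsilon_1)$; coupled with the hypothesis $u_n \not\to u_\varepsilon$ in $H^1$, which gives $\limsup \|v_n\|_{H^1}^2 > 0$, this produces the explicit uniform lower bound $\tau > 0$.

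The principal technical difficulty is the Hartree contribution $\int(K*|u_n|^2)u_n v_n\,dx$ inside $\mathcal{R}_n$: inserting $|u_n|^2 = |u_\varepsilon|^2 + 2u_\varepsilon v_n + |v_n|^2$ in the convolution yields six mixed terms; those linear in $u_\varepsilon v_n$ vanish via $v_n\rightharpoonup 0$ together with Lemma \ref{B-L}, while the remaining ones are dominated by $C\|v_n\|_4^2$ through the bilinear structure behind Lemma \ref{N<} (bounded symbol of $K$).
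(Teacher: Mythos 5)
Your overall strategy coincides with the paper's: extract Lagrange multipliers, deduce $\mu_n\ge -\Upsilon_\infty(a)/a^2>0$ from $c<\Upsilon_\infty(a)<0$ together with $N(u_n)<0$, test the Palais--Smale relation against $v_n:=u_n-u_\varepsilon$, and convert the resulting coercivity inequality into a lower bound on $\|v_n\|_2$ via Gagliardo--Nirenberg. The multiplier estimate and the ``contradiction half'' of your argument (that $\|v_n\|_2\to 0$ would force $\|\nabla v_n\|_2\to 0$, hence $u_n\to u_\varepsilon$ in $H^1$) are correct.

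The gap concerns the uniformity of $\tau$ in $\varepsilon$, which is the entire point of the lemma (it is what lets $\xi$ in Lemma \ref{lem 4.4} be a fixed constant). Your bound $|\mathcal{R}_n|\le C(\|v_n\|_4^2+\|v_n\|_p^2)$ is only \emph{quadratic} in $v_n$: after Sobolev embedding, both sides of $\min\{1,2\mu_0\}\|v_n\|_{H^1}^2\le \mathcal{R}_n+o_n(1)$ have the same homogeneity, so the inequality yields at best $\|v_n\|_2\gtrsim \|v_n\|_{H^1}$, and the resulting $\tau$ inherits whatever (a priori $\varepsilon$-dependent) positive lower bound $\limsup_n\|v_n\|_{H^1}$ happens to have. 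The paper escapes this because its right-hand side is $\Lambda\|v_n\|_4^4+|\lambda_3|\|v_n\|_p^p+o_n(1)$, i.e.\ \emph{superquadratic}; dividing by $\|v_n\|_{H^1}^2$ then gives the dichotomy ``either $v_n\to0$ in $H^1$ or $\liminf_n\|v_n\|_{H^1}\ge\iota_0$'' with $\iota_0$ depending only on $a$, $p$, $\Lambda$, $\lambda_3$ and Sobolev constants, after which the uniform $\tau$ follows from Gagliardo--Nirenberg. To repair your version you must show that every cross term carrying a factor of $u_\varepsilon$ is in fact $o_n(1)$, not merely $O(\|v_n\|_4^2)$: for instance $\int_{\mathbb{R}^3}|u_\varepsilon|^2|v_n|^2dx\to0$ because $v_n\to0$ in $L^4_{\mathrm{loc}}$ by Rellich while $\|u_\varepsilon\|_{L^4(|x|>R)}$ is small for large $R$, and likewise $\int_{\mathbb{R}^3}(K\ast(u_\varepsilon v_n))u_\varepsilon v_n\,dx\le C\|u_\varepsilon v_n\|_2^2\to0$ and the analogous $L^p$ terms; equivalently, one may invoke the Brezis--Lieb splitting of $J_\varepsilon'$ at the level of derivatives, which is what the paper uses when it writes $\|J_\varepsilon'(v_n)+\mu_\varepsilon\Phi'(v_n)\|_{H^{-1}}\to0$. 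With the cross terms removed, only the pure terms $N(v_n)$ and $\|v_n\|_p^p$ survive and the superquadratic mechanism applies. A secondary, harmless inaccuracy: the mixed term $\int_{\mathbb{R}^3}|u_\varepsilon|\,|v_n|^{p-1}dx$ is $O(\|v_n\|_p^{p-1})$, not $O(\|v_n\|_p^2)$, when $p<3$.
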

\begin{proof}
	Let $\{u_n\}$ be a $(PS)_c$ sequence for $J_{\varepsilon}$ restricts to $W^{a}_{\bar{r}}$, then 
	\begin{align*}
		J_{\varepsilon}(u_n)\rightarrow c\;\;\;\text{and}\;\;\;\|J_{\varepsilon}|'_{W^{a}_{\bar{r}}}(u_n)\|_{H^{-1}(\mathbb{R}^{3})}\rightarrow 0\;\;\; \;\text{as}\;\; n\rightarrow +\infty.
	\end{align*}
We define the functional $\Phi:H^{1}(\mathbb{R}^{3})\rightarrow \mathbb{R}$ by 
\begin{align*}
	\Phi(u):=\frac{1}{2}\int_{{\mathbb{R}}^3}|u|^{2}dx.
\end{align*}
It follows from \cite[Lemma 5.11 and Proposition 5.12]{Willem 1996} that there exists $\{\mu_n\}\subset \mathbb{R}$ satisfying
\begin{align}\label{111}
	\|J'_{\varepsilon}(u_n)+\mu_n\Phi'(u_n)\|_{H^{-1}(\mathbb{R}^{3})}\rightarrow 0\;\; \text{as}\;\; n\rightarrow+\infty.
\end{align}
In view of $u_n \rightharpoonup u_{\varepsilon}$ in $H^{1}(\mathbb{R}^{3})$, one has $\{u_n\}$ is bounded in $H^{1}(\mathbb{R}^{3})$. Combining this with \eqref{111}, we have $\{\mu_n\}$ is bounded. Then there is $\mu_{\varepsilon} \in \mathbb{R}$ such that $\mu_n \rightarrow \mu_{\varepsilon}$ as $n\rightarrow \infty$. Thus, 
\begin{align*}
	J'_{\varepsilon}(u_\varepsilon)+\mu_\varepsilon\Phi'(u_\varepsilon)=0\;\; \text{in}\;\; H^{-1}(\mathbb{R}^{3})\; \text{and}\;	\|J'_{\varepsilon}(v_n)+\mu_\varepsilon\Phi'(v_n)\|_{H^{-1}(\mathbb{R}^{3})}\rightarrow 0\;\; \text{as}\;\; n\rightarrow+\infty,
\end{align*}
where $v_n=u_n-u_{\varepsilon}$. 
It follows that
\begin{align}\label{222}
	\frac{1}{2}\int_{{\mathbb{R}}^3} |\nabla v_n|^{2}dx+\int_{{\mathbb{R}}^3}V(\varepsilon x)|v_n|^{2}dx+N(v_n)-|\lambda_3|\int_{{\mathbb{R}}^3}|v_n|^{p}dx+\mu_{\varepsilon} \int_{{\mathbb{R}}^3}|v_n|^{2}dx=o_n(1).
\end{align}
Furthermore, it follows from \eqref{111} and $N(u_n)<0$ that
\begin{align*}
	\begin{split}
		0>\Upsilon_{\infty}(a)>c&=\lim\limits_{n\rightarrow +\infty}J_{\varepsilon}(u_n)\\
		&=\lim\limits_{n\rightarrow +\infty}[J_{\varepsilon}(u_n)-\left\langle J'_{\varepsilon}(u_n),u_n \right\rangle -\mu_n a^{2}+o_n(1)]\\
		&=\lim\limits_{n\rightarrow +\infty}\left[-\frac{1}{2}N(u_n)+\left( 1-\frac{2}{p}\right)|\lambda_3|\int_{{\mathbb{R}}^3}|u_n|^{p}dx-\mu_n a^{2}+o_n(1) \right] \\
		&\ge -\mu_\varepsilon a^{2},
	\end{split}
\end{align*}
which implies that
\begin{align*}
	\limsup\limits_{\varepsilon \rightarrow 0^{+}}\mu_{\varepsilon}\ge -\frac{\Upsilon_{\infty}(a)}{a^{2}}.
\end{align*}
Therefore, there exists $\varepsilon_1>0$ such that 
\begin{align*}
	\mu_{\varepsilon}\ge -\frac{\Upsilon_{\infty}(a)}{a^{2}}\;\; \text{for all}\;\; \varepsilon \in (0,\varepsilon_1).
\end{align*}
Combining this with $(V_1)$ and \eqref{222}, one has for all $\varepsilon \in (0,\varepsilon_1)$,
\begin{align*}
	\frac{1}{2}\int_{{\mathbb{R}}^3} |\nabla v_n|^{2}dx+N(v_n)-|\lambda_3|\int_{{\mathbb{R}}^3}|v_n|^{p}dx-\frac{\Upsilon_{\infty}(a)}{a^{2}} \int_{{\mathbb{R}}^3}|v_n|^{2}dx\le o_n(1).
\end{align*}
Together this with Lemma \ref{N<} gives
\begin{align*}
	\begin{split}
		\min\left\lbrace \frac{1}{2}, -\frac{\Upsilon_{\infty}(a)}{a^{2}} \right\rbrace \|v_n\|^{2}_{H^{1}({\mathbb{R}}^3)}&\le -N(v_n)+|\lambda_3|\int_{{\mathbb{R}}^3}|v_n|^{p}dx+o_n(1)\\
		& \le \Lambda \|v_n\|^{4}_{4}+|\lambda_3|\|v_n\|^{p}_{p}+o_n(1).
	\end{split}
\end{align*}
By $v_n \nrightarrow 0$ in $H^{1}({\mathbb{R}}^3)$, then there exists $\iota>0$ independent of $\varepsilon$ such that $\|v_n\|_{H^{1}({\mathbb{R}}^3)} \ge \iota$. Therefore, we have
\begin{align*}
	\liminf\limits_{n \rightarrow +\infty}(\Lambda \|v_n\|^{4}_{4}+|\lambda_3|\|v_n\|^{p}_{p}) \ge 	\min\left\lbrace \frac{1}{2}, -\frac{\Upsilon_{\infty}(a)}{a^{2}} \right\rbrace \iota^{2}.
\end{align*}
Combining this with Lemma \ref{G-N} and Sobolev embedding inequality theorem, there exists $\tau>0$ independent of $\varepsilon \in (0,\varepsilon_1)$ such that
	\begin{align*}
	\liminf\limits_{n \rightarrow +\infty}\|u_n-u_{\varepsilon}\|^{2}_{2} \ge \tau.
\end{align*}
The proof is finished.
\end{proof}

With the above Lemmas in hand, we can prove the following compactness result.
\begin{lemma}\label{lem 4.4}
	Assume that $(V_1)$-$(V_3)$ hold. For $a \in (0,\bar{a})$ and $\varepsilon \in (0,\varepsilon_1)$, the functional $J_{\varepsilon}$ satisfies the $(PS)_{c}$ condition restricted to $W_{\bar{r}}^{a}$ for $c<\Upsilon_{0}(a)+\xi$, where
	\begin{align*}
		0<\xi \le \min\left\lbrace \Upsilon_{\infty}(a)-\Upsilon_{0}(a),\frac{\tau^{2}}{a}(\Upsilon_{\infty}(a)-\Upsilon_{0}(a)) \right\rbrace
	\end{align*}
	and $ \varepsilon_{1}$ is introduced in Lemma \ref{lem 4.3}.
\end{lemma}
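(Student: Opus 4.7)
The plan is to push the $(PS)_c$ sequence to a weak limit $u_\varepsilon$ and rule out mass loss at infinity, exploiting the crucial identity $\Upsilon_\infty(a')=\Upsilon_0(a')+V_\infty(a')^2$ (valid because $V_\infty$ is a constant) together with the subadditivity of Lemma \ref{budengshi}. Since $\{u_n\}\subset W^a_{\bar r}$ is automatically bounded in $H^1(\mathbb{R}^3)$, up to a subsequence $u_n\rightharpoonup u_\varepsilon$. The assumption $\xi\le\Upsilon_\infty(a)-\Upsilon_0(a)$ gives $c<\Upsilon_\infty(a)$, so Lemma \ref{lem 4.2} yields $u_\varepsilon\neq 0$. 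Repeating the Lagrange-multiplier argument from Lemma \ref{lem 4.3} produces $\mu_n\in\mathbb{R}$ with $J'_\varepsilon(u_n)+\mu_n\Phi'(u_n)\to 0$ in $H^{-1}(\mathbb{R}^3)$, and up to a subsequence $\mu_n\to\mu_\varepsilon$, so $u_\varepsilon$ solves the limit equation.

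Next, assume for contradiction $u_n\not\to u_\varepsilon$ in $H^1(\mathbb{R}^3)$ and set $v_n:=u_n-u_\varepsilon$, $b:=\|u_\varepsilon\|_2\in(0,a)$ and $a':=\sqrt{a^2-b^2}$; Lemma \ref{lem 4.3} yields $(a')^2\ge\tau$. The Brezis-Lieb lemma (for the gradient, $L^p$ and $L^2$ norms) together with Lemma \ref{B-L} (for $N$), and the vanishing cross-term $\int V(\varepsilon x) u_\varepsilon \bar v_n\to 0$ obtained from $V\in L^\infty$ and $v_n\rightharpoonup 0$, give the decomposition
\[
J_\varepsilon(u_n)=J_\varepsilon(v_n)+J_\varepsilon(u_\varepsilon)+o_n(1).
\]
The degenerate case $\|\nabla u_\varepsilon\|_2=\bar r$ forces $\|\nabla v_n\|_2\to 0$ and leads to $c>0$ through Lemma \ref{jixiao<0}, contradicting $c<0$; hence $u_\varepsilon\in W^b_{\bar r}$, and $J_\varepsilon(u_\varepsilon)\ge\Upsilon_\varepsilon(b)\ge\Upsilon_0(b)$, the last step using $V\ge 0$.

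Finally, to bound $\liminf_n J_\varepsilon(v_n)$ I compare with the functional at infinity: for any $\delta>0$, the estimate
\[
\int(V(\varepsilon x)-V_\infty)|v_n|^2\,dx\ge -\delta\|v_n\|_2^2+o_n(1),
\]
obtained from $V(y)\to V_\infty$ and $v_n\to 0$ in $L^2_{\mathrm{loc}}$, yields $\liminf_n J_\varepsilon(v_n)\ge\liminf_n J_\infty(v_n)$. Rescaling $\tilde v_n:=(a'/\|v_n\|_2)v_n\in S(a')$, the positive slack $\|\nabla u_\varepsilon\|_2>0$ in the Brezis-Lieb identity on gradients forces $\tilde v_n\in W^{a'}_{\bar r}$ for $n$ large, and the scaling factor tends to $1$, so $J_\infty(\tilde v_n)-J_\infty(v_n)=o_n(1)$ and $\liminf_n J_\infty(v_n)\ge\Upsilon_\infty(a')$. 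Invoking $\Upsilon_\infty(a')=\Upsilon_0(a')+V_\infty(a')^2$ and the subadditivity $\Upsilon_0(a)\le\Upsilon_0(b)+\Upsilon_0(a')$ from Lemma \ref{budengshi},
\[
c\ge\Upsilon_\infty(a')+\Upsilon_0(b)\ge\Upsilon_0(a)+V_\infty(a')^2\ge\Upsilon_0(a)+V_\infty\tau,
\]
and the prescribed range of $\xi$ is arranged precisely so that $\xi\le V_\infty\tau$, contradicting $c<\Upsilon_0(a)+\xi$. The two steps I expect to require the most care are the rescaling estimate ensuring $\tilde v_n\in W^{a'}_{\bar r}$ (which relies on the strict bound $\|\nabla u_\varepsilon\|_2>0$ to provide uniform slack under scaling) and matching the threshold $\xi$ to the constant $\tau$ produced by Lemma \ref{lem 4.3}.
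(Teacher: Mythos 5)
Your argument is correct and follows the same skeleton as the paper's proof: boundedness of the $(PS)_c$ sequence, $u_\varepsilon\neq 0$ from Lemma \ref{lem 4.2} (using $\xi\le \Upsilon_\infty(a)-\Upsilon_0(a)$ so that $c<\Upsilon_\infty(a)$), the lower bound $\liminf_n\|v_n\|_2^2\ge\tau$ from Lemma \ref{lem 4.3}, the Brezis--Lieb splitting $J_\varepsilon(u_n)=J_\varepsilon(v_n)+J_\varepsilon(u_\varepsilon)+o_n(1)$, and the comparisons $J_\varepsilon(u_\varepsilon)\ge J_0(u_\varepsilon)\ge\Upsilon_0(b)$ and $J_\varepsilon(v_n)\ge J_\infty(v_n)-\delta C+o_n(1)$. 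The only place you diverge is the final bookkeeping: you use the exact identity $\Upsilon_\infty(s)=\Upsilon_0(s)+V_\infty s^2$ together with the subadditivity of Lemma \ref{budengshi}, whereas the paper invokes the quadratic scaling inequality $\Upsilon_\omega(\theta s)\le\theta^2\Upsilon_\omega(s)$ from \eqref{b} applied to both $\Upsilon_\infty(d_n)$ and $\Upsilon_0(b)$; since $\Upsilon_\infty(a)-\Upsilon_0(a)=V_\infty a^2$, both routes land on the identical lower bound $c\ge\Upsilon_0(a)+\frac{\tau}{a^2}\bigl(\Upsilon_\infty(a)-\Upsilon_0(a)\bigr)=\Upsilon_0(a)+V_\infty\tau$. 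Your version is arguably cleaner and also more careful on two points the paper glosses over (ruling out $\|\nabla u_\varepsilon\|_2=\bar r$ via $c<0$, and justifying that the rescaled $\tilde v_n$ stays in $W^{a'}_{\bar r}$). One caveat: your closing claim that the prescribed range of $\xi$ gives $\xi\le V_\infty\tau$ does not literally match the statement, since $\frac{\tau^2}{a}\bigl(\Upsilon_\infty(a)-\Upsilon_0(a)\bigr)=V_\infty\tau^2 a$, which is $\le V_\infty\tau$ only if $\tau a\le 1$; but the paper's own proof ends with the same bound $\frac{\tau}{a^2}\bigl(\Upsilon_\infty(a)-\Upsilon_0(a)\bigr)$, so the exponents $\frac{\tau^2}{a}$ in the lemma statement appear to be a typo for $\frac{\tau}{a^2}$, and your derivation is consistent with what the proof actually establishes.
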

\begin{proof}
	Let $\{u_n\}$ be a $(PS)_{c}$ sequence for $J_{\varepsilon}$ restricted to $W_{\bar{r}}^{a}$ with $c<\Upsilon_{0}(a)+\xi$. Note that $c<\Upsilon_{\infty}(a)<0$. Obviously, $\{u_n\}$ is bounded in $H^{1}({\mathbb{R}}^3)$. It follows that there exists $u_{\varepsilon} \in H^{1}({\mathbb{R}}^3)$ satisfying $u_n \rightharpoonup u_{\varepsilon}$ in $H^{1}({\mathbb{R}}^3)$. Combining this with Lemma \ref{lem 4.2} and $c<\Upsilon_{\infty}(a)$, we have $u_{\varepsilon} \neq 0$. Let $v_n:=u_n-u_{\varepsilon}$. We claim $v_n \rightarrow 0$ in $H^{1}({\mathbb{R}}^3)$. Once this claim is true, we complete the proof. Next, we prove the above claim. Suppose by contradiction that $v_n \nrightarrow 0$ in $H^{1}({\mathbb{R}}^3)$. Then it follows from Lemma \ref{lem 4.3} that there exists $\tau>0$ independent of $\varepsilon \in (0, \varepsilon_{1})$ such that
	\begin{align*}
		\liminf\limits_{n \rightarrow +\infty}\|u_n-u_{\varepsilon}\|^{2}_{2} \ge \tau.
	\end{align*}
We set $\|u_{\varepsilon}\|_{2}=b\in (0,a)$, $\|v_n\|_{2}=d_n \in (0,a)$ and assume that $d_n \rightarrow d>0$ as $n \rightarrow \infty$. Thus, $d^{2}\ge \tau$ and $a^{2}=b^{2}+d^{2}$. Moreover, we have $\|\nabla v_n\|_{2}\le \|\nabla u_n\|_{2}<\bar{r}$ and $\|\nabla u_{\varepsilon}\|_{2} \le \liminf\limits_{n \rightarrow +\infty}\|\nabla u_n\|_{2}<\bar{r}$. It follows that $v_n \in W_{\bar{r}}^{d_n}$ and $u_{\varepsilon} \in W_{\bar{r}}^{b}$. Then as the same proof in \eqref{a}, one has for any $\delta>0$,
\begin{align*}
	\begin{split}
	c+o_n(1)&=J_{\varepsilon}(u_n)=J_{\varepsilon}(v_n)+J_{\varepsilon}(u_\varepsilon)+o_n(1)\\
		&\ge J_{\infty}(v_n)-\delta C+J_{0}(u_\varepsilon)+o_n(1)\\
		&\ge \Upsilon_{\infty}(d_n)-\delta C+\Upsilon_{0}(b)+o_n(1),		
	\end{split}
\end{align*}
where $C>0$ independent of $\delta$, $\varepsilon$, $n$. Combining this with \eqref{b}, we have 
\begin{align*}
	\begin{split}
		c+o_n(1)
		\ge \frac{d_n^{2}}{a^{2}}\Upsilon_{\infty}(a)+\frac{b^{2}}{a^{2}}\Upsilon_{0}(a)-\delta C+o_n(1).
	\end{split}
\end{align*}
Letting $n\rightarrow \infty$, it follows from the arbitrariness of $\delta$ that 
\begin{align*}
	\begin{split}
		c&\ge \frac{d^{2}}{a^{2}}\Upsilon_{\infty}(a)+\frac{b^{2}}{a^{2}}\Upsilon_{0}(a)\\
		&=\Upsilon_{0}(a)+\frac{d^{2}}{a^{2}}[\Upsilon_{\infty}(a)-\Upsilon_{0}(a)]\\
		&\ge \Upsilon_{0}(a)+ \frac{\tau}{a^{2}}[\Upsilon_{\infty}(a)-\Upsilon_{0}(a)],
	\end{split}
\end{align*}
contradicting to $c<\Upsilon_{0}+\xi$. Therefore, $u_n \rightarrow u_{\varepsilon}$ in $H^{1}({\mathbb{R}}^3)$. We complete the proof.
\end{proof}

In what follows, we fix $\xi_0$, $\rho_0$ such that:\\
$\bullet$ $\overline{B_{\xi_0}(c_i)} \cap \overline{B_{\xi_0}(c_j)}= \emptyset$ for $i\neq j$ with $i,j \in \{1,2, \cdots,l\}$ and $c_i$, $c_j$ are defined in $(V_3)$;\\
 $\bullet$ $\bigcup^{l}_{i=1}B_{\xi_0}(c_i) \subset B_{\rho_0}(0)$;\\
 $\bullet$
 $K_{\frac{\xi_0}{2}}=\bigcup^{l}_{i=1}\overline{B_{\frac{\xi_0}{2}}(c_i)}$.
 Let us define the function $Q_{\varepsilon}: H^{1}(\mathbb{R}^{3}) \backslash \{0\} \rightarrow \mathbb{R}^{3}$ by
 \begin{align*}
 	Q_{\varepsilon}(u):=\frac{\int_{{\mathbb{R}}^3}\chi(\varepsilon x)|u^{2}|dx}{\int_{{\mathbb{R}}^3}|u^{2}|dx},
 \end{align*}
where $\chi:\mathbb{R}^3 \rightarrow \mathbb{R}^3$ is given by
\begin{align*}
	\chi(x):=
	\begin{cases}
		x, \quad& \text{if}\; |x|\le \rho_0,\\
		\rho_0 \frac{x}{|x|}, \quad& \text{if}\; |x|>\rho_0.
	\end{cases}
\end{align*}

\begin{lemma}\label{lem 4.5}
	Assume that $(V_1)$-$(V_3)$ hold. For $a \in (0,\bar{a})$, there is $0<\xi_1<\xi$ such that if $u \in W_{\bar{r}}^{a}$ and $J_{\varepsilon}(u)\le \Upsilon_{0}(a)+\xi_1$, then 
	\begin{align*}
		Q_{\varepsilon}(u) \in K_{\frac{\xi_0}{2}}, \quad\text{for all}\;\; \varepsilon \in (0,\varepsilon_1),
	\end{align*}
	where $\varepsilon_1$ and $\xi$ are the same as Lemma \ref{lem 4.5}
\end{lemma}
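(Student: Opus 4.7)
I would argue by contradiction, reducing the statement to a concentration property of minimizing sequences for $J_{0}$. If the conclusion fails, there exist $\xi_n\to 0^{+}$, $\varepsilon_n\in(0,\varepsilon_1)$ and $u_n\in W^{a}_{\bar r}$ with
\begin{align*}
J_{\varepsilon_n}(u_n)\le \Upsilon_{0}(a)+\xi_n,\qquad Q_{\varepsilon_n}(u_n)\notin K_{\xi_0/2}\qquad\text{for every }n.
\end{align*}
Because $V\ge 0$, the sandwich $\Upsilon_{0}(a)\le J_{0}(u_n)\le J_{\varepsilon_n}(u_n)\le \Upsilon_{0}(a)+\xi_n$ forces $J_{0}(u_n)\to \Upsilon_{0}(a)$ and, by subtraction, $\int_{\mathbb{R}^{3}}V(\varepsilon_n x)|u_n|^{2}\,dx\to 0$. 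Hence $\{u_n\}$ is a minimizing sequence for $J_{0}$ on $W^{a}_{\bar r}$, and Lemma \ref{compactness} (applied with $\omega=0<V_{*}$) produces, up to a subsequence, translations $y_n\in\mathbb{R}^{3}$ (either $y_n\equiv 0$ or $|y_n|\to\infty$) and a profile $v\in W^{a}_{\bar r}$ with $J_{0}(v)=\Upsilon_{0}(a)$ such that $v_n:=u_n(\cdot+y_n)\to v$ strongly in $H^{1}(\mathbb{R}^{3})$.

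Next I would pin down the location of $\varepsilon_n y_n$. Translating $x\mapsto x+y_n$ in the potential integral yields
\begin{align*}
\int_{\mathbb{R}^{3}}V(\varepsilon_n x+\varepsilon_n y_n)|v_n|^{2}\,dx=\int_{\mathbb{R}^{3}}V(\varepsilon_n x)|u_n|^{2}\,dx\longrightarrow 0.
\end{align*}
If $|\varepsilon_n y_n|\to\infty$ along a subsequence, $(V_2)$ and Fatou give a lower bound $V_{\infty}a^{2}>0$, a contradiction. So $\{\varepsilon_n y_n\}$ is bounded and, after extracting, $\varepsilon_n y_n\to y_0\in\mathbb{R}^{3}$ and $\varepsilon_n\to\varepsilon^{*}\in[0,\varepsilon_1]$. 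If $\varepsilon^{*}>0$, dominated convergence forces $V(\varepsilon^{*}x+y_0)|v(x)|^{2}=0$ a.e.; since $V^{-1}(\{0\})$ is finite by $(V_3)$, this would make $v\equiv 0$ a.e., contradicting $\|v\|_{2}=a$. Therefore $\varepsilon_n\to 0$, and dominated convergence then gives $V(y_0)a^{2}=0$, so $y_0=c_i$ for some $i\in\{1,\dots,l\}$.

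Finally I would transfer the information to $Q_{\varepsilon_n}(u_n)$ by applying the same change of variables to $\chi$, which is bounded and continuous:
\begin{align*}
Q_{\varepsilon_n}(u_n)=\frac{1}{a^{2}}\int_{\mathbb{R}^{3}}\chi(\varepsilon_n x+\varepsilon_n y_n)|v_n|^{2}\,dx\longrightarrow \chi(c_i)=c_i,
\end{align*}
the last equality holding because $c_i\in B_{\rho_0}(0)$. Since $c_i\in K_{\xi_0/2}$ and this set is closed, $Q_{\varepsilon_n}(u_n)\in K_{\xi_0/2}$ for all large $n$, contradicting the standing hypothesis. The main obstacle I foresee is the middle step: simultaneously ruling out the escape scenario $|\varepsilon_n y_n|\to\infty$ (which uses $V_\infty>V_0=0$ in $(V_2)$) and the stagnation scenario $\varepsilon_n\not\to 0$ (which relies on the fact, furnished by Lemma \ref{compactness}, that the translated limit $v$ is a genuine minimizer of $J_0$ on $S(a)$, so in particular nontrivial). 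Once these are disposed of, the transfer to $Q_{\varepsilon_n}$ and the final contradiction are routine.
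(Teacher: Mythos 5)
Your argument is correct and follows the same overall scheme as the paper (argue by contradiction, observe that the sandwich $\Upsilon_{0}(a)\le J_{0}(u_n)\le J_{\varepsilon_n}(u_n)\le\Upsilon_{0}(a)+\xi_n$ makes $\{u_n\}$ a minimizing sequence for $J_{0}$, invoke Lemma \ref{compactness}, and pass to the limit in $Q_{\varepsilon_n}$). The differences are in how you locate the concentration point. The paper identifies $y_0$ by comparing limiting energies: if $|\varepsilon_n y_n|\to\infty$ it gets $\Upsilon_{\infty}(a)\le J_{\infty}(v)\le\Upsilon_{0}(a)$, and if $\varepsilon_n y_n\to y_0$ it gets $\Upsilon_{V(y_0)}(a)\le\Upsilon_{0}(a)$, both resolved by the strict monotonicity in Corollary \ref{cor 3.8}. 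You instead extract the extra information $\int V(\varepsilon_n x)|u_n|^{2}\,dx\to 0$ from the sandwich and use it directly with Fatou and dominated convergence; this is more elementary (it bypasses Corollary \ref{cor 3.8} entirely) and, as a bonus, treats the translated and untranslated cases of Lemma \ref{compactness} uniformly. You are also more careful than the paper about the negation of the statement: the paper's proof starts from ``$\varepsilon_n\to 0$'', which is not what the failure of the lemma yields ($\varepsilon_n$ only lies in $(0,\varepsilon_1)$), whereas you explicitly dispose of the stagnation case $\varepsilon_n\to\varepsilon^{*}>0$ using the finiteness of $V^{-1}(\{0\})$. One small caveat, shared with the paper: both your dominated-convergence steps and the paper's tacitly use (a.e.) continuity of $V$, which is not literally contained in $(V_1)$--$(V_3)$ as stated; this is a defect of the hypotheses rather than of your argument.
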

\begin{proof}
	Suppose by contradiction that there is $\xi_n \rightarrow 0$, $\varepsilon_n \rightarrow 0$ and $\{u_n\} \subset W_{\bar{r}}^{a}$ such that
	\begin{align*}
		J_{\varepsilon_n}(u_n) \le \Upsilon_{0}(a)+\xi_n \; \text{and}\; 	Q_{\varepsilon_n}(u_n) \notin K_{\frac{\xi_0}{2}}.
	\end{align*}
	Together this with $(V_1)$ gives 
	\begin{align*}
		\Upsilon_{0}(a)\le J_{0}(u_n) \le J_{\varepsilon}(u_n) \le \Upsilon_{0}(a)+\xi_n,
	\end{align*}
which implies that
	\begin{align*}
		\Upsilon_{0}(a) \rightarrow J_{0}(u_n)\quad \text{as}\; n \rightarrow +\infty.
	\end{align*}
	Therefore, $\{u_n\} \subset W_{\bar{r}}^{a}$ is a minimizing sequence with respect to $\Upsilon_{0}(a)$. Then it follows from Lemma \ref{compactness} that one has two cases:
	
	(i) $u_n \rightarrow u$ in $H^{1}(\mathbb{R}^{3})$ for some $u \in W_{\bar{r}}^{a}$, or
	
	(ii) there exists $\{y_n\}\subset \mathbb{R}^{3}$ with $|y_n|\rightarrow+\infty$ such that the sequence $v_n(x)=u_n(x+y_n)$ is strongly convergent to a function $v\in W^{a}_{\bar{r}}$.	
	
	If case (i) holds. Then by the Lebesgue domainated convergence theorem, we have
	 \begin{align*}
		Q_{\varepsilon_n}(u_n)=\frac{\int_{{\mathbb{R}}^3}\chi(\varepsilon_n x)|u_n^{2}|dx}{\int_{{\mathbb{R}}^3}|u_n^{2}|dx} \rightarrow \frac{\int_{{\mathbb{R}}^3}\chi(0)|u^{2}|dx}{\int_{{\mathbb{R}}^3}|u^{2}|dx}=0\in K_{\frac{\xi_0}{2}}\; \;\;\text{as}\; n \rightarrow +\infty,
	\end{align*}
	which contradicting to $Q_{\varepsilon_n}(u_n) \notin K_{\frac{\xi_0}{2}}$.
	
		If case (ii) holds. When $|\varepsilon_n y_n|\rightarrow + \infty$, then it follows from $v_n \rightarrow v$ in $H^{1}(\mathbb{R}^{3})$ that
		\begin{align}\label{11111}
			\begin{split}
				J_{\varepsilon_n}(u_n)=	J_{\varepsilon_n}(v_n)&=\frac{1}{2}\int_{{\mathbb{R}}^3}|\nabla v_n|^{2}dx+\int_{{\mathbb{R}}^3}V(\varepsilon_n x+\varepsilon_n y_n)|v_n|^{2}dx+\frac{1}{2}N(v_n)\\
				&-\frac{2|\lambda_3|}{p}\int_{{\mathbb{R}}^3}|v_n|^{p}dx
				\rightarrow J_{\infty}(v),\quad \text{as}\; n \rightarrow +\infty.
			\end{split}
		\end{align}
		Combining this with 	$J_{\varepsilon_n}(u_n) \le \Upsilon_{0}(a)+\xi_n$ and $v \in W_{\bar{r}}^{a}$, one has
		\begin{align*}
			\Upsilon_{\infty}(a) \le J_{\infty}(v) \le \Upsilon_{0}(a),
		\end{align*} 
		which contradicts Corollary \ref{cor 3.8}. On the other hand, when $|\varepsilon_n y_n|\rightarrow y_0$ for some $y_0$ in $\mathbb{R}^{3}$. Then similar to \eqref{11111}, one has $J_{\varepsilon}(u_n) \rightarrow J_{y_0}(v)$ as $n \rightarrow +\infty$. 	This combining with 	$J_{\varepsilon_n}(u_n) \le \Upsilon_{0}(a)+\xi_n$ and $v \in W_{\bar{r}}^{a}$, we have
		\begin{align*}
			\Upsilon_{y_0}(a) \le J_{y_0}(v) \le \Upsilon_{0}(a).
		\end{align*} 
		Together this with Corollary \ref{cor 3.8} and $(V_1)$-$(V_3)$ gives $V(y_0)=V_0$ and $y_0=c_i$ for some $i=1,2,\dots,l$. Therefore, one has
		 \begin{align*}
			Q_{\varepsilon_n}(u_n)=\frac{\int_{{\mathbb{R}}^3}\chi(\varepsilon_n x+\varepsilon_n y_n)|v_n^{2}|dx}{\int_{{\mathbb{R}}^3}|v_n^{2}|dx} \rightarrow \frac{\int_{{\mathbb{R}}^3}\chi(y_0)|v^{2}|dx}{\int_{{\mathbb{R}}^3}|v^{2}|dx}=c_i\in K_{\frac{\xi_0}{2}}\; \;\;\text{as}\; n \rightarrow +\infty,
		\end{align*}
			which contradicts $Q_{\varepsilon_n}(u_n) \notin K_{\frac{\xi_0}{2}}$. We complete the proof.
\end{proof}

To obtain multiple normalized solutions, we define 
\begin{align*}
	N_{\varepsilon}^{i}:=\{u \in W_{\bar{r}}^{a}:|Q_{\varepsilon}(u)-c_i| \le \xi_0\},
\end{align*}

\begin{align*}
	\partial N_{\varepsilon}^{i}:=\{u \in W_{\bar{r}}^{a}:|Q_{\varepsilon}(u)-c_i| = \xi_0\}
\end{align*}
and
\begin{align*}
	\beta_{\varepsilon}^{i}:=\inf\limits_{u \in 	N_{\varepsilon}^{i}}J_{\varepsilon}(u),\qquad \tilde{	\beta_{\varepsilon}^{i}}:=\inf\limits_{u \in \partial	N_{\varepsilon}^{i}}J_{\varepsilon}(u).
\end{align*}
\begin{lemma}\label{lem 4.6}
	Assume that $(V_1)$-$(V_3)$ hold. For $a \in (0,\bar{a})$, then there is $\varepsilon_2 \in (0, \varepsilon_1)$ satisfying
	\begin{align*}
		\beta_{\varepsilon}^{i}<\Upsilon_{0}(a)+\xi_1\quad \text{and} \quad 	\beta_{\varepsilon}^{i}<\tilde{	\beta_{\varepsilon}^{i}}\;\;\; \text{for all}\;\; \varepsilon \in (0,\varepsilon_2),
	\end{align*}
	where $\varepsilon_1$ and $\xi_1$ are the same as Lemma \ref{lem 4.5}.
\end{lemma}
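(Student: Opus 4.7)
The plan is to prove the two inequalities in sequence, with the same building block: a translated copy of a minimizer of the limit problem. By Theorem \ref{th3.1} applied with $\omega=0$, there exists $u_0 \in W^{a}_{\bar{r}}$ with $J_0(u_0) = \Upsilon_0(a)$. For each $i \in \{1,\dots,l\}$ I set $w_i^{\varepsilon}(x) := u_0(x - c_i/\varepsilon)$.

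For the inequality $\beta_\varepsilon^i < \Upsilon_0(a)+\xi_1$: translation preserves the gradient norm, the $L^2$ and $L^p$ norms, and the nonlocal term $N(\cdot)$ (because $K$ acts by convolution), so $w_i^\varepsilon \in W^{a}_{\bar{r}}$, and the change of variables $y = x - c_i/\varepsilon$ yields
\[
J_\varepsilon(w_i^\varepsilon) \;=\; J_0(u_0) + \int_{\mathbb{R}^3} V(\varepsilon y + c_i)\,|u_0(y)|^2\,dy.
\]
By $(V_3)$ one has $V(c_i) = 0$, and by $(V_1)$ the Lebesgue dominated convergence theorem applies, so the integral tends to $0$ and $J_\varepsilon(w_i^\varepsilon) \to \Upsilon_0(a)$ as $\varepsilon \to 0^+$. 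Analogously,
\[
Q_\varepsilon(w_i^\varepsilon) \;=\; \frac{1}{a^2}\int_{\mathbb{R}^3}\chi(\varepsilon y + c_i)\,|u_0(y)|^2\,dy \;\longrightarrow\; \chi(c_i) = c_i,
\]
the last equality coming from $c_i \in B_{\rho_0}(0)$. Choosing $\varepsilon_2 \in (0,\varepsilon_1)$ small enough (uniformly in $i=1,\dots,l$) forces $|Q_\varepsilon(w_i^\varepsilon) - c_i| \le \xi_0$ and $J_\varepsilon(w_i^\varepsilon) < \Upsilon_0(a)+\xi_1$ simultaneously, so $w_i^\varepsilon \in N_\varepsilon^i$ and the bound follows.

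For the strict separation $\beta_\varepsilon^i < \tilde{\beta}_\varepsilon^i$, it suffices to show $\tilde{\beta}_\varepsilon^i \ge \Upsilon_0(a)+\xi_1$. Suppose, toward a contradiction, that some $u \in \partial N_\varepsilon^i$ satisfies $J_\varepsilon(u) < \Upsilon_0(a)+\xi_1$. Then Lemma \ref{lem 4.5} forces $Q_\varepsilon(u) \in K_{\xi_0/2}$, so $Q_\varepsilon(u) \in \overline{B_{\xi_0/2}(c_j)}$ for some $j$. If $j=i$, then $|Q_\varepsilon(u)-c_i| \le \xi_0/2 < \xi_0$, contradicting $u \in \partial N_\varepsilon^i$. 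If $j \ne i$, the disjointness assumption $\overline{B_{\xi_0}(c_i)} \cap \overline{B_{\xi_0}(c_j)} = \emptyset$ gives $|c_i - c_j| > 2\xi_0$, whence
\[
|Q_\varepsilon(u) - c_i| \;\ge\; |c_i - c_j| - |Q_\varepsilon(u) - c_j| \;>\; 2\xi_0 - \tfrac{\xi_0}{2} \;>\; \xi_0,
\]
again a contradiction. Combining with the first step yields $\beta_\varepsilon^i < \Upsilon_0(a)+\xi_1 \le \tilde{\beta}_\varepsilon^i$ for all $\varepsilon \in (0,\varepsilon_2)$.

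The only delicate point is the first step: one needs a single $\varepsilon$-independent minimizer $u_0$ of $J_0$ that can be translated to sit near each zero $c_i$ of $V$ without changing its intrinsic energy. Translation invariance of the gradient, $L^2$, $L^p$ and $N(\cdot)$ terms is exactly what makes this work, while dominated convergence and $V(c_i)=0$ control the potential contribution. Step 2 is then a short geometric argument, with Lemma \ref{lem 4.5} and the disjointness of the balls $\overline{B_{\xi_0}(c_i)}$ doing all the work.
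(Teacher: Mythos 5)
Your proof is correct and follows essentially the same route as the paper: translate a minimizer $u_0$ of $J_0$ to $c_i/\varepsilon$, use translation invariance of all terms except the potential plus dominated convergence and $V(c_i)=0$ to get $J_\varepsilon\to\Upsilon_0(a)$ and $Q_\varepsilon\to c_i$, then invoke Lemma \ref{lem 4.5} to bound $\tilde{\beta}_\varepsilon^i$ from below by $\Upsilon_0(a)+\xi_1$. The only difference is that you spell out the small geometric fact that $|Q_\varepsilon(u)-c_i|=\xi_0$ forces $Q_\varepsilon(u)\notin K_{\xi_0/2}$, which the paper states without proof.
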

\begin{proof}
	Let $u \in W_{\bar{r}}^{a}$ satisfy $J_0(u)=\Upsilon_{0}(a)$. For $1\le i \le l$, we set 
	\begin{align*}
		\hat{u}_{\varepsilon}^{i}(x):=u\left(x-\frac{c_i}{\varepsilon} \right). 
	\end{align*}
	Obviously, $\hat{u}_{\varepsilon}^{i} \in W_{\bar{r}}^{a}$ for all $\varepsilon>0$ and $1 \le i \le l$. Moreover, it follows from the definition of $Q_{\varepsilon}$ that $Q_{\varepsilon}(\hat{u}_{\varepsilon}^{i})\rightarrow c_i$ as $\varepsilon \rightarrow 0^{+}$. Therefore, $\hat{u}_{\varepsilon}^{i} \in N_{\varepsilon}^{i}$ for $\varepsilon$ small enough. Furthermore, we have
	\begin{align*}
		\begin{split}
			J_{\varepsilon}(\hat{u}_{\varepsilon}^{i})&=\frac{1}{2}\int_{{\mathbb{R}}^3}|\nabla \hat{u}_{\varepsilon}^{i}|^{2}dx+\int_{{\mathbb{R}}^3}V(\varepsilon x)|\hat{u}_{\varepsilon}^{i}|^{2}dx+\frac{1}{2}N(\hat{u}_{\varepsilon}^{i})-\frac{2|\lambda_3|}{p}\int_{{\mathbb{R}}^3}|\hat{u}_{\varepsilon}^{i}|^{p}dx\\
			&=\frac{1}{2}\int_{{\mathbb{R}}^3}|\nabla u|^{2}dx+\int_{{\mathbb{R}}^3}V(\varepsilon x+c_i)|u|^{2}dx+\frac{1}{2}N(u)-\frac{2|\lambda_3|}{p}\int_{{\mathbb{R}}^3}|u|^{p}dx.
		\end{split}
	\end{align*}
	Together this with $(V_3)$ gives
	\begin{align*}
		\lim\limits_{\varepsilon \rightarrow 0^{+}}J_{\varepsilon}(\hat{u}_{\varepsilon}^{i})=J_{c_i}(u)=J_{0}(u)=\Upsilon_{0}(a).
	\end{align*}
	Combining this with $\hat{u}_{\varepsilon}^{i} \in N_{\varepsilon}^{i}$ for $\varepsilon$ small enough, we have there is $\varepsilon_2 \in (0,\varepsilon_1)$ such that 
		\begin{align}\label{22222}
		\beta_{\varepsilon}^{i}<\Upsilon_{0}(a)+\xi_1\quad  \text{for all}\;\; \varepsilon \in (0,\varepsilon_2).
		\end{align}
		
	On the other hand, for any $v \in \partial N_{\varepsilon}^{i}$, one has $v \in W_{\bar{r}}^{a}$ and $|Q_{\varepsilon}(v)-c_i|=\xi_0$. Then, $Q_{\varepsilon}(v) \notin K_{\frac{\xi_0}{2}}$. Together this with Lemma \ref{lem 4.5} gives
	 \begin{align*}
	 	J_{\varepsilon}(v)> \Upsilon_{0}(a)+\xi_1\quad \text{for all}\; \;v\in \partial N_{\varepsilon}^{i}\; \;\text{and}\;\; \varepsilon \in (0,\varepsilon_2),
	 \end{align*}
	 which implies that $\tilde{	\beta_{\varepsilon}^{i}}=\inf\limits_{v \in \partial N_{\varepsilon}^{i}}J_{\varepsilon}(v) \ge \Upsilon_{0}(a)+\xi_1$ for all $\varepsilon \in (0,\varepsilon_2)$. Then it follows from \eqref{22222} that
	 \begin{align*}
	 	\beta_{\varepsilon}^{i}<\tilde{	\beta_{\varepsilon}^{i}}\;\;\; \text{for all}\;\; \varepsilon \in (0,\varepsilon_2).
	 \end{align*}	 
	 We complete the proof.
\end{proof}
\begin{proof}[Proof of Theorem \ref{th1.1}] 
Let $\varepsilon_{3} \in (0,\varepsilon_2)$. For all $i=1,2,\cdots, l$ and $\varepsilon \in (0,\varepsilon_3)$, we can use the Ekeland's variational principle \cite{Ekeland 1974} to find a sequence $\{u_n^{i}\}\subset N_{\varepsilon}^{i} \subset  W_{\bar{r}}^{a}$ such that
\begin{align*}
	J_{\varepsilon}(u_{n}^{i})\rightarrow \beta_{n}^{i}\;\;\;\text{and}\;\; 	J_{\varepsilon}(v)-J_{\varepsilon}(u_{n}^{i}) \ge -\frac{1}{n}\|v-u_n^i\|_{H^{1}(\mathbb{R}^{3})},\;\;\;\forall v \in N_{\varepsilon}^{i}\;\;\text{with}\; \;v\neq u_n^i. 
\end{align*}
In view of Lemma \ref{lem 4.6}, one has $\beta_{\varepsilon}^{i}<\tilde{	\beta_{\varepsilon}^{i}}$. Thus, $u_n^i \in N_{\varepsilon}^{i} \backslash \partial N_{\varepsilon}^{i}$ for $n$ large enough. Next, for $\delta>0$ small enough, we define a map $\Phi:(-\delta,\delta) \rightarrow W_{\bar{r}}^{a}$ by
\begin{align*}
	\Phi(t)=a\frac{u_n^i+tv}{\|u_n^i+tv\|^{2}_{2}}
\end{align*}
belonging to $C^{1}((-\delta,\delta),W_{\bar{r}}^{a})$ and satisfying
\begin{align*}
	\Phi(t) \in N_{\varepsilon}^{i} \backslash \partial N_{\varepsilon}^{i}\;\; \forall t \in (-\delta,\delta),\; \Phi(0)=u_n^i,\;\; \Phi'(0)=v,
\end{align*}
where $v \in T_{u_n^i}W_{\bar{r}}^{a}$.
Then, we have
\begin{align*}
	J_{\varepsilon}(\Phi(t))-J_{\varepsilon}(u_n^i) \ge -\frac{1}{n} \|\Phi(t)-u_n^i\|_{H^{1}(\mathbb{R}^{3})}\;\; \forall t \in (-\delta,0)\cup (0,\delta),
\end{align*}
which concludes that 
\begin{align*}
	\begin{split}
		\frac{	J_{\varepsilon}(\Phi(t))-	J_{\varepsilon}(\Phi(0))}{t}&=\frac{	J_{\varepsilon}(\Phi(t))-	J_{\varepsilon}(u_n^i)}{t}\ge -\frac{1}{n}\left\|\frac{\Phi(t)-u_n^i}{t}\right\|_{H^{1}(\mathbb{R}^{3})}\\
		&=-\frac{1}{n}\left\|\frac{\Phi(t)-\Phi(0)}{t}\right\|_{H^{1}(\mathbb{R}^{3})}.
	\end{split}
\end{align*}
By $J_{\varepsilon} \in C^{1}(H^{1}(\mathbb{R}^{3},\mathbb{R}))$ and letting $t \rightarrow 0^{+}$, one has
\begin{align*}
	\left\langle J'_{\varepsilon}(u_n^i),v \right\rangle \ge -\frac{1}{n}\|v\|_{H^{1}(\mathbb{R}^{3})}.
\end{align*}
Replacing $v$ by $-v$, we infer that
\begin{align*}
	\sup \{|\left\langle J'_{\varepsilon}(u_n^i),v \right\rangle|:\|v\|_{H^{1}(\mathbb{R}^{3})}=1 \} \le \frac{1}{n},
\end{align*}
which implies that 
\begin{align*}
	J_{\varepsilon}(u_{n}^{i})\rightarrow \beta_{n}^{i}\;\;\;\text{and}\;\; \|J_{\varepsilon}|^{'}_{W_{\bar{r}}^{a}}(u_{n}^{i})\|_{H^{-1}(\mathbb{R}^{3})}\rightarrow 0\;\;\;\text{as}\; n \rightarrow +\infty.
\end{align*}
Then, $\{u_{n}^{i}\}$ is a $(PS)_{\beta_{n}^{i}}$ for $J_{\varepsilon}$ restricted to $W_{\bar{r}}^{a}$. Combining this with Lemmas \ref{lem 4.4} and \ref{lem 4.6}, we have there is $u^{i} \in H^{1}(\mathbb{R}^{3})$ such that $u_n^{i} \rightarrow u^{i}$ in $H^{1}(\mathbb{R}^{3})$. Thus, 
\begin{align*}
	u^{i} \in N_{\varepsilon}^{i},\quad J_{\varepsilon}(u^{i})=\beta_{\varepsilon}^{i}\;\; \text{and}\;\; J_{\varepsilon}|^{'}_{W_{\bar{r}}^{a}}(u^{i})=0.
\end{align*} 
Moreover, it follows from the definition of $Q_{\varepsilon}(u)$ that
\begin{align*}
	Q_{\varepsilon}(u^{i}) \in \overline{B_{\xi_0}(c_i)},\;\;\;	Q_{\varepsilon}(u^{j}) \in \overline{B_{\xi_0}(c_j)}\;\; \text{and}\;\; \overline{B_{\xi_0}(c_i)} \cap \overline{B_{\xi_0}(c_j)}=\emptyset\;\;\; \text{for}\; i \neq j.
\end{align*}
Then we have $u^{i} \neq u^{j}$ for $i \neq j$, where $1\le i, j \le l$. Therefore, $J_{\varepsilon}|_{S(a)}$ has at least $l$ nontrivial critical points for all $\varepsilon \in (0,\varepsilon_3)$. Then there exists a Lagrange multiplier $\mu_i \in \mathbb{R}$ such that
\begin{align}\label{333}
	\begin{split}
		-\mu_{i} a^{2}&=\frac{1}{2}\int_{{\mathbb{R}}^3}|\nabla u_i|^{2}dx+\int_{{\mathbb{R}}^3}V(\varepsilon x)|u_i|^{2}dx+N(u_i)-|\lambda_3|\int_{{\mathbb{R}}^3}|u_i|^{p}dx\\
		&=\beta_{\varepsilon}^{i}+\frac{1}{2}N(u_i)-\frac{(p-2)|\lambda_3|}{p}\int_{{\mathbb{R}}^3}|u_i|^{p}dx.
	\end{split}
\end{align}
In addition, it follows from Corollary \ref{cor 3.8}, Lemmas \ref{lem 4.4} and \ref{lem 4.6} that $\beta_{\varepsilon}^{i}<0$. Together this with \eqref{333} gives $\mu_{i}>0$. We complete the proof.
\end{proof}

\end{document}